\documentclass{amsart}
\usepackage{verbatim,amsfonts,color,mathrsfs,stmaryrd,graphicx,mathdots}
\usepackage{xcolor}
\usepackage{tikz,tikz-cd,tkz-euclide}
\usepackage{amsmath,amsthm,amssymb,latexsym,setspace,graphicx,float,pgfplots,lscape,tabularx,hyperref}
\usetikzlibrary{decorations.markings,math,patterns,positioning,arrows.meta}


\theoremstyle{plain}
\newtheorem{Thm}{Theorem}
\newtheorem{Cor}[Thm]{Corollary}

\newtheorem{Lem}[Thm]{Lemma}
\newtheorem{Prop}[Thm]{Proposition}

\theoremstyle{definition}
\newtheorem{Def}[Thm]{Definition}
\newtheorem{Rmk}[Thm]{Remark}
\newtheorem{Eg}[Thm]{Example}

\theoremstyle{remark}

\textwidth=5.75in
\oddsidemargin=0in

\begin{document}
\tikzset{->-/.style={decoration={markings,mark=at position #1 with {\arrow{>}}},postaction={decorate}}}
 
\title{Canonical translation surfaces for computing Veech groups}

\author{Brandon Edwards}
\address{Intel Corporation JF2\\
2111 NE 25th Ave.\\
Hillsboro, OR 97124}
\email{brandon.edwards@intel.com}

\author{Slade Sanderson}
\address{Oregon State University\\Corvallis, OR 97331}
\email{sandesla@oregonstate.edu}

\author{Thomas A. Schmidt}
\address{Oregon State University\\Corvallis, OR 97331}
\email{toms@math.orst.edu}
\keywords{Veech group, Fuchsian group, Dirichlet domain}
\subjclass[2010]{37F30 (30F60 32G15 37D40 52C20)}
\date{23 August 2021}


\begin{abstract}  For each stratum of the space of translation surfaces, we introduce an infinite translation surface containing in an appropriate manner a copy of every translation surface of the stratum.   Given a translation surface $(X, \omega)$ in the stratum, a matrix is in its Veech group  
$\mathrm{SL}(X,\omega)$ if and only if an associated affine automorphism of the infinite surface sends  each of a finite set,   the ``marked" {\em Voronoi staples}, arising from orientation-paired segments appropriately perpendicular to Voronoi 1-cells, to another pair of orientation-paired ``marked" segments.  

We prove a result of independent interest.   For each real  $a\ge \sqrt{2}$  there is an explicit hyperbolic ball such that for any Fuchsian group trivially stabilizing $i$,   the Dirichlet domain centered at $i$ of the  group already agrees within the ball with the intersection of the hyperbolic half-planes determined by the group elements whose Frobenius norm is at most $a$.  

Together, these results give rise to a new algorithm for computing Veech groups. 
\end{abstract}

\maketitle

\tableofcontents
 
\section{Introduction}\label{s:intro}      Translation surfaces have been studied for decades,  motivated by their appearance in various areas of mathematics, as well as by their intrinsic beauty.    Although their study can be said to have begun with Teichm\"uller in the 1940s, it was work of Thurston in the 1970s and then Masur and Veech in especially the 1980s that brought this to the forefront.    Results since that time have been numerous, with many further celebrated results.     

 \subsection{Canonical surface per stratum} 
Any non-zero holomorphic 1-form $\omega$ on a closed Riemann surface $X$ gives local coordinates by integration, the result is a translation surface  $(X, \omega)$, see say \cite{Masur}.    The zeros of $\omega$ result in singularities of the ``flat" metric of the translation surface,  the translation surfaces of the same set of orders of zeros form a {\em stratum}.     Although in its natural topology, almost every stratum has more than one connected component \cite{KontsevichZorich},  we associate to each stratum in a natural manner a single (infinite and, in general, non-connected) translation surface (see Definition~\ref{d:canTranSurf})  and show that every translation surface in the stratum can be appropriately represented on this canonical translation surface.    More precisely, every translation surface of the stratum has an isometric copy of its Voronoi 2-cells on the canonical surface,   allowing reconstruction of  the original translation surface (see Proposition~\ref{p:reconstructingXomega}).

 \subsection{Veech groups}    Various dynamical properties of the linear flow on a translation surface are determined by its Veech group, \cite{Veech}.   Every Veech group is a non-cocompact Fuchsian group \cite{Veech}.     By way of Teichm\"uller theory, when the group is a lattice, there is a corresponding algebraic curve in Riemann moduli space, $\mathcal M_g$ where $g$ is the genus of $X$, which is embedded with respect to the Teichm\"uller metric \cite{Veech};  Veech named such curves {\em Teichm\"uller curves},   the translation surface is said to be a {\em lattice surface}.       Veech also showed that certain triangle groups arise as what we call Veech groups,  Bouw-M\"oller \cite{BouwM} showed that up to finite index every non-cocompact Fuchsian triangle group so arises.    On the other hand,  almost every translation surface has a trivial Veech group \cite{Moller}; and, whereas any polygon whose vertex angles are rational multiples of $\pi$ can be ``unfolded" to achieve a translation surface, only three non-isosceles acute Euclidean triangles give lattice surfaces \cite{KenyonSmillie, Puchta}. 
Although  constraints on which non-cocompact Fuchsian groups can be Veech groups,  such as equality of trace field with invariant trace field if the group contains a hyperbolic element (essentially a result of \cite{KenyonSmillie}) are known,   a complete list of Fuchsian groups realized as Veech groups remains to be determined.  

We give a new algorithm for computing Veech groups of translation surfaces (compact and without boundary),  see \S\ref{ss:algoOut}.  For a lattice surface, our algorithm completely computes the group.     Our approach is different from the general algorithms in the literature \cite{Bowman} (see also \cite{Veech2}), \cite{ BroughtonJudge, Mukamel, SmillieWeiss} (implementations exist for the algorithms of Mukamel \cite{Mukamel} and, as of  quite recently by S.~Freedman \cite{Freedman}, of Bowman \cite{Bowman}), 
as well as from the special case algorithms \cite{Schmithusen, Freidinger}.   As with most of these others, our algorithm has two parts:  determining group elements and building a fundamental domain for the action of the group on the hyperbolic plane. 

\subsection{New criterion for membership in Veech group}
  To determine elements, we view a translation surface as being assembled from the disjoint union of its Voronoi 2-cells by identifying shared edges.     There is a finite set of saddle connections whose images on the aforementioned canonical surface lead to a reconstruction of the original surface by identifying the two edges lying on the  perpendicular  bisector of the image of a saddle connection and that of its orientation-reversed saddle connection.    We call these pairs of saddle connections {\em Voronoi staples}, and  show that an element of $\mathrm{SL}_2(\mathbb R)$ is in the Veech group exactly if it sends  the Voronoi staples on the canonical surface to copies of orientation-paired saddle connections, Proposition~\ref{p:memCrit}.

\bigskip 
\subsection{New result on construction of Dirichlet domains}
 While we build a fundamental domain in a standard manner, we insist on finding elements in increasing Frobenius norm.   We show a result of independent interest:  For any Fuchsian group (trivially stabilizing $z=i$),   the intersection of the half-planes determined by the elements of any explicitly bounded Frobenius norm agrees with the Dirichlet domain based at $z=i$ of the group within a corresponding explicit hyperbolic ball.   See Proposition~\ref{p:agreeingWithinBalls}.   
 
 Recall that  the Dirichlet domain  is the nested (appropriately decreasing) limit of the convex bodies defined by intersecting the half-planes appropriately defined by finite sets of elements.   Thus, the new result is that  by taking the Frobenius norm ordering,  there are also common domains which converge (appropriately increasing) to the interior of the Dirichlet domain.    With this,  we give a test for when the elements of a bounded Frobenius norm generate the group, see Theorem~\ref{t:done!}.   This test thus gives a stopping condition for our algorithm, one that can only be fulfilled if the group is a lattice.   
 
 \subsection{Examples, and related ongoing work}
 We have confirmed the computation of various Veech groups using the algorithm \cite{Sanderson}.   Here we use a simple example to illustrate matters (see Example~\ref{eg:MustRespectOrientationPairing}, Figure~\ref{f:memberFailNoPairingRespect}, and Subsection~\ref{ss:theL}).  We also compute elements within an infinitely generated Veech group, finding that the first   tens of thousands of elements in order of  Frobenius norm are all contained in a subgroup generated by three elements (see Subsection~\ref{ss:InfGen} ).
 
In on-going work \cite{Sanderson2}, the second named author has shown how the approach of this paper can be reversed so as to begin with a Fuchsian group and determine those translation surfaces which have it as their Veech group.

 \bigskip
 This paper is based on the Ph.D. dissertation \cite{Edwards}  of the first named author, as well as the implementation by the second named author \cite{Sanderson}.  That implementation was built upon  Delecroix-Hooper's ``flat surface" computer code package  \cite{DelecroixHooper}  written in  SageMath \cite{Sage}.

\section{Background}\label{s:back}

We introduce basic notation as well as remind the reader of some standard results. 

\subsection{Translation surfaces}   See, say,  \cite{Masur, Moller, Wright} for much of the following.  

\subsubsection{Three views of a translation surface.}   A {\em translation surface} is a real surface $X$ such that on the complement of a finite set of points  $X \setminus \Sigma$ there is an atlas whose transition functions are all translations, and such that the flat structure on this complement extends to $X$ to have cone singularities of angles integral multiples of $2\pi$.    Equivalently, a  translation surface is a Riemann surface $X$ with a non-zero holomorphic differential (also called an abelian differential) $\omega$ where $\Sigma$ is the set of zeros of $\omega$; due to this view we often use the notation $(X, \omega)$ for a translation surface.    A third equivalent form is as a collection of polygons in the Euclidean plane,  with equal length parallel sides identified by translation (with the result an oriented surface); singularities occur only at points arising from vertices.   We say $(X, \omega)$  in this form is {\em polygonally presented}.    

It is common to change without warning from one to another of these perspectives when discussing any given translation surface.     We will also use without warning the fact that subsurfaces of a translation surface are naturally translation surfaces.

Note that  it is sometimes useful to allow the inclusion of removable singularities, thus points of cone angle $2 \pi$, in $\Sigma$.

\subsubsection{Affine diffeomorphisms and the Veech group}     The group of translations of the plane is $\mathrm{Trans}(\mathbb R^2) \cong \mathbb R^2$.    A {\em translation map} between translation surfaces is a continuous map sending singularities to singularities which is a translation with respect to the local coordinates given by the respective translation atlases.   
Recall that the affine group of the plane is $\mathrm{Trans}(\mathbb R^2)  \rtimes \mathrm{GL}_2(\mathbb R)$.      
An {\em affine diffeomorphism} of a translation surface is a homeomorphism $f:X \to X$ which sends $\Sigma$ to itself, while inducing an affine diffeomorphism on $X\setminus \Sigma$. That is, the maps on local coordinates (as induced in the usual manner) are elements of the affine group of the plane.   Under composition, the set of all of these forms a group, $\mathrm{Aff}(X, \omega)$.    Because of the normality of translations in the affine group of the plane, the linear part of each affine diffeomorphism $f$ is constant, independent of choice of local coordinates,  and thus there is a group homomorphism $\mathrm{der}: \mathrm{Aff}(X, \omega) \to \mathrm{GL}_2(\mathbb R)$.    The image  of the subgroup of orientation-preserving affine diffeomorphisms, $\mathrm{Aff}^+(X, \omega)$,  is called the {\em Veech group} of $(X, \omega)$.    We denote the kernel of $\mathrm{der}$ by $\mathrm{Trans}(X, \omega)$.

\bigskip
\noindent 
{\bf Convention.}    Unless otherwise stated, the notation  $(X, \omega)$ will denote a closed translation surface: compact without boundary. 

\bigskip 
The flat structure on any translation surface allows Lebesgue measure on the plane to  induce a measure on the surface.  When the translation surface is compact this is a finite measure, and it is an observation of Veech \cite{Veech}  that the Veech group is then a subgroup of $\mathrm{SL}_2(\mathbb R)$.  To emphasize this, one denotes the Veech group by $\mathrm{SL}(X, \omega)$.    Veech \cite{Veech} also showed that $\mathrm{SL}(X, \omega)$ is a discrete subgroup of   $\mathrm{SL}_2(\mathbb R)$.

\subsubsection{Saddle connections, strata, matrix group action}
 It is easy to see that the foliation of $\mathbb R^2$ by horizontal lines  induces a foliation on $X\setminus \Sigma$.   The lines of this foliation meet at points of $\Sigma$ of cone angle greater than $2\pi$ to give saddle singularities.

 Any geodesic ray emanating from a singularity is called a {\em separatrix}.   A separatrix joining two singularities (with no other singularity in its interior) is called a {\em saddle connection}, or simply as in \cite{KenyonSmillie} a  {\em segment}.
  Integration along a segment results in the corresponding element of $\mathbb C$, called a {\em holonomy vector}.    
 
 The collection of all (closed) translation surfaces $(X, \omega)$ with the holomorphic 1-form $\omega$ having zeros of orders $d_1, \dots, d_s$ (with repetition allowed) is called a   {\em stratum} and denoted $\mathcal H(d_1,\dots,  d_s)$.   Note that the cone angle at a zero of order $d$ is  $2 (d+1) \pi$.   By either the Riemann-Roch Theorem or that of Gauss-Bonnet,  all translation surfaces of a fixed stratum have the same genus.    
 
 The group $\mathrm{GL}_2(\mathbb R)$ acts on the collection of all translation surfaces by way of post-composition with local coordinate functions.   This action preserves each stratum.  In the setting of closed translation surfaces, Veech showed that the $\mathrm{SL}_2(\mathbb R)$-stabilizer of $(X, \omega)$ under this action is isomorphic to  $\mathrm{SL}(X, \omega)$.    Furthermore, for any $M \in \mathrm{SL}_2(\mathbb R)$, one has $M \mathrm{SL}(X, \omega) M^{-1} = \mathrm{SL}(M\cdot (X, \omega)\,)$.  
 
\subsubsection{Voronoi decomposition}  Each translation surface has a flat metric with conical singularities on it.     Masur-Smillie \cite{MasurSmillie} sketch the theory of Voronoi decompositions subordinate to the set of singularities.      A Voronoi 2-cell is an open, connected, set of points which are closer to some singularity than to any other.   The boundary of each Voronoi 2-cell is a union of Voronoi 1-cells consisting of open geodesics (each of whose points have exactly two distinct shortest paths to $\Sigma$), meeting in Voronoi 0-cells  (single points each having at least three distinct shortest paths to $\Sigma$).     Each compact $(X, \omega)$ is the union of finitely many open Voronoi 2-cells (exactly one per each element of $\Sigma$), finitely many Voronoi 1-cells and finitely many Voronoi 0-cells.     We will refer to the union of a Voronoi 1-cell and its two endpoint 0-cells simply as an {\em edge}.

\section{Canonical surface, Voronoi staples and a Veech group membership criterion}\label{s:modelsStaplesMembers} 

\subsection{Canonical translation surface, $\mathcal O$}   To each stratum we associate an infinite translation surface. 
 
\begin{Def}\label{d:canTranSurf}  
Given a stratum $\mathcal H(d_1,\dots,  d_s)$, for each subscript $i$ let $\mathcal O_i$  be the infinite translation surface $(\mathbb C, z^{d_i} \,dz)$ and let $\mathcal O = \mathcal O(d_1, \dots,d_{s})$ be the disjoint union of the $\mathcal O_i$.  

Partition the  index set  $\{1, \dots, s\}$ corresponding to repeated values of the $d_i$, with  partition elements of  cardinalities say $n_1, \dots, n_t$.    
\end{Def} 

\begin{Lem}\label{l:transGps}  
Each $\mathrm{Trans}(\mathcal O_i)$ is a cyclic group of order $1 + d_i$, whose generator acts as a rotation of angle $2 \pi$.   The group $\mathrm{Trans}(\mathcal O)$ is generated by the product of  permutation groups  $S_{n_1}\times \cdots \times S_{n_t}$ acting by change of index, along with the direct product of the various $\mathrm{Trans}(\mathcal O_i)$.
\end{Lem}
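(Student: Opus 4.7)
The plan is to first analyze each component $\mathcal{O}_i$ in isolation and then pass to the disjoint union. The central feature I would exploit is that $\mathcal{O}_i = (\mathbb{C}, z^{d_i}\,dz)$ carries the globally defined flat-coordinate primitive $w(z) = z^{d_i+1}/(d_i+1)$ and has a unique (cone) singularity at $z=0$.

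First I would exhibit the claimed cyclic subgroup of $\mathrm{Trans}(\mathcal{O}_i)$. The candidate generator is $f: z \mapsto e^{2\pi i/(d_i+1)} z$. Direct calculation gives $f^*(z^{d_i}\,dz) = z^{d_i}\,dz$, so $f$ is a holomorphic automorphism of $\mathcal{O}_i$, and $w\circ f = w$, so in any local $w$-chart $f$ is the translation by $0$; its derivative is $I$, placing it in $\mathrm{Trans}(\mathcal{O}_i)$. In the flat structure, a $z$-rotation by $2\pi/(d_i+1)$ corresponds to a $2\pi$ rotation around the cone point of total angle $2\pi(d_i+1)$, as asserted.

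Next I would show no other translations exist. Any $f \in \mathrm{Trans}(\mathcal{O}_i)$ must fix the sole singularity, so $f(0) = 0$. Covering the regular set by local $w$-charts, the condition $\mathrm{der}(f)=I$ forces $f$ to act as $w\mapsto w+c$ locally, so $w\circ f - w$ is locally constant on $\mathbb{C}\setminus\{0\}$; this set is connected and the function extends continuously to $\mathbb{C}$, giving $w(f(z))-w(z)\equiv c$, and evaluating at $z=0$ with $f(0)=0$ yields $c=0$. Hence $f(z)^{d_i+1}=z^{d_i+1}$, so $f(z)/z$ is a continuous map from $\mathbb{C}^*$ into the discrete set of $(d_i+1)$-th roots of unity, whence $f(z)=\zeta z$ for a single such $\zeta$. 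I expect this step to be the main subtlety: one must be careful about the meaning of ``flat coordinates'' at a cone point of total angle exceeding $2\pi$, and the clean resolution is precisely that $w$ is globally single-valued on $\mathbb{C}$, letting the argument proceed as an analytic identity rather than a local-chart computation.

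For $\mathrm{Trans}(\mathcal{O})$, any $f$ permutes the connected components $\mathcal{O}_i$, and each of its restrictions is an orientation-preserving isometry with derivative $I$, which is possible only between $\mathcal{O}_i$ and $\mathcal{O}_j$ with $d_i=d_j$. The induced permutation of indices therefore lies in $S_{n_1}\times\cdots\times S_{n_t}$. Conversely, every such permutation is realized by gluing identity maps in $z$-coordinates between components with equal $d_i$, and composing freely with the rotations classified above on each component produces every element of $\mathrm{Trans}(\mathcal{O})$, so the two indicated families together generate the whole group.
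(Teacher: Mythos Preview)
Your proof is correct and follows essentially the same route as the paper. Both arguments identify $\mathrm{Trans}(\mathcal{O}_i)$ with the deck group of the cyclic branched cover $w=z^{d_i+1}/(d_i+1)$: the paper states this via covering-space language (``$\mathrm{Trans}(\mathcal{O}_i)$ restricts to the deck transformation group of the covering of the punctured complex plane''), while you carry out the same identification explicitly by showing $w\circ f - w \equiv 0$ and then solving $f(z)^{d_i+1}=z^{d_i+1}$; the treatment of $\mathrm{Trans}(\mathcal{O})$ is identical in both.
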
 
\begin{proof}  Any element of $\mathrm{Trans}(\mathcal O_i)$ maps the singularity to itself and off of this induces translations in local coordinates.  These local coordinates arise from the ramified covering $\mathcal O_i \to \mathbb C$ and thus any induced translation is in fact the identity.   In other words,   $\mathrm{Trans}(\mathcal O_i)$ restricts to  $\mathcal O_i \setminus\{0\}$ to be the deck transformation group of the covering of the punctured complex plane.  Thus,  $\mathrm{Trans}(\mathcal O_i)$ is indeed a cyclic group of order $1 + d_i$, generated by  a rotation of $2\pi$.

Acting by permutation of indices, $S_{n_1}\times \cdots \times S_{n_t}$ certainly gives a subgroup of $\mathrm{Trans}(\mathcal O)$.    Furthermore, the trivial extension to $\mathcal O$ of the action of $\mathrm{Trans}(\mathcal O_i)$,   acting as the identity on all other $\mathcal O_j$,  gives an injection of $\mathrm{Trans}(\mathcal O_i)$ into $\mathrm{Trans}(\mathcal O)$.     Any affine diffeomorphism $\varphi$ of $\mathcal O$ must preserve cone angles, and hence composition with some element of $S_{n_1}\times \cdots \times S_{n_t}$ gives an affine diffeomorphism that sends each $\mathcal O_i$ to itself.  If furthermore 
$\varphi \in \mathrm{Trans}(\mathcal O)$ then so is this composition, and hence for each $i$ it restricts to give an element of $\mathrm{Trans}(\mathcal O_i)$.   Thus, the result holds.  
\end{proof}

\begin{Lem}\label{l:sl2IsInVeechGpOfCalO}  
For each $A \in \mathrm{GL}_2\mathbb{R}$ there is an affine diffeomorphism $f_A \in \mathrm{Aff}(\mathcal O)$ such that $\mathrm{der}(f_A) = A$. 
\end{Lem}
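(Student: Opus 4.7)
The plan is to handle each component $\mathcal{O}_i = (\mathbb{C}, z^{d_i} dz)$ independently and then take the disjoint union of the resulting affine diffeomorphisms. Thus I reduce to fixing an index $i$, writing $d = d_i$, and constructing, for each $A \in \mathrm{GL}_2\mathbb{R}$, an affine diffeomorphism of $\mathcal{O}_i$ with derivative $A$.

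The core tool is the holomorphic branched cover
\[
\pi \colon \mathbb{C} \longrightarrow \mathbb{C}, \qquad \pi(z) = \tfrac{1}{d+1}\, z^{d+1},
\]
which satisfies $\pi^{*}(dw) = z^{d}\,dz$ and is unramified of degree $d+1$ on $\mathbb{C}^{*}$. So $\pi$ exhibits $\mathcal{O}_i$ as a $(d+1)$-fold branched translation covering of $(\mathbb{C}, dw)$, and I will construct my $f_A$ on $\mathcal{O}_i$ as a lift through $\pi$ of the linear self-homeomorphism $A \colon \mathbb{C} \to \mathbb{C}$; that is, as a map $\tilde A$ with $\pi \circ \tilde A = A \circ \pi$.

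The lift is supplied by covering-space theory on $\mathbb{C}^{*}$. The cover $\pi|_{\mathbb{C}^{*}}$ corresponds to the subgroup $(d+1)\mathbb{Z} \subset \pi_1(\mathbb{C}^{*}) \cong \mathbb{Z}$, while $A$ restricted to $\mathbb{C}^{*}$ induces $\pm 1$ on $\pi_1(\mathbb{C}^{*})$; hence $(A \circ \pi)_{*}\pi_1(\mathbb{C}^{*}) = (d+1)\mathbb{Z} = \pi_{*}\pi_1(\mathbb{C}^{*})$, so the lifting criterion is met, with no case distinction needed for orientation-reversing $A$. Continuity of $\tilde A$ over the branch point is forced by the estimate $|\tilde A(z)|^{d+1} = (d+1)\,|A(\pi(z))|$, which pushes $\tilde A(z) \to 0$ as $z \to 0$; invertibility of the resulting continuous extension follows from applying the same construction to $A^{-1}$.

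The derivative computation is then essentially tautological: since the flat coordinate on $\mathcal{O}_i \setminus \{0\}$ is $w = \pi(z)$, the relation $\pi \circ \tilde A = A \circ \pi$ reads $w \circ \tilde A = A \cdot w$, so in flat coordinates $\tilde A$ acts as the linear map $A$, giving $\mathrm{der}(\tilde A) = A$. Setting $f_A|_{\mathcal{O}_i} = \tilde A$ on each component, the disjoint union is the affine diffeomorphism sought. The one step I expect to require a little care is the bookkeeping at the branch point: verifying continuity of $\tilde A$ at $0$, that $\tilde A$ truly extends to a homeomorphism of $\mathcal{O}_i$ (not merely of $\mathbb{C}^{*}$), and that the independent choices of lifts across the distinct components $\mathcal{O}_i$ assemble into a well-defined affine diffeomorphism of $\mathcal{O}$ with linear part $A$ in every chart.
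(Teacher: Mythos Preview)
Your argument is correct, but it follows a different route from the paper's. The paper does not use covering-space lifting at all: instead it factors through the deformed surface $A\cdot\mathcal O$. Concretely, the identity map of underlying sets gives $\mathrm{Id}_{\mathcal O}\colon \mathcal O \to A\cdot\mathcal O$ with linear part $A$, and then, since each $A\cdot\mathcal O_i$ is again an infinite cone of angle $2\pi(d_i+1)$, there is a translation-equivalence $\widehat F_{A,i}\colon A\cdot\mathcal O_i \to \mathcal O_i$ with trivial linear part; the composition $f_A=\widehat F_A\circ\mathrm{Id}_{\mathcal O}$ then has linear part $A$. Your approach instead writes down the branched cover $\pi(z)=\tfrac{1}{d+1}z^{d+1}$ realizing $\mathcal O_i$ over $(\mathbb C,dw)$, lifts the linear self-map $A$ of the base through $\pi$ using the lifting criterion on $\mathbb C^{*}$, and reads off $\mathrm{der}(\tilde A)=A$ directly from $\pi\circ\tilde A=A\circ\pi$. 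What your approach buys is an explicit, self-contained construction of $f_A$ as a map of the underlying $\mathbb C$, with the extension over the cone point handled by an elementary estimate; what the paper's approach buys is that the auxiliary map $\widehat F_A$ is exactly the object that reappears in the proof of the membership criterion (Proposition~\ref{p:theCriterion}), so its introduction here is doing double duty.
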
 
\begin{proof}  We first briefly sketch the result in the setting of $(\mathbb C, dz)$, see Figure~\ref{f:jacobiansForC}.   
   The surface $A \cdot (\mathbb C, dz)$ is formed by  post-composing the chart maps of $(\mathbb C, dz)$ with $A$, using the standard action of $\mathrm{GL}_2\mathbb{R}$ on $\mathbb C \cong \mathbb R^2$.  The identity map  on $\mathbb C$ gives $\mathrm{Id_{\mathbb C}}: (\mathbb C, dz) \to A\cdot (\mathbb C, dz)$, of Jacobian equal to $A$.   Similarly, the map  sending points of $\mathbb C \cong \mathbb R^2$  to their corresponding image under $A$  induces   a diffeomorphism  $F_{A}:A\cdot (\mathbb C, dz)\to  (\mathbb C, dz)$  of Jacobian equal to $I_2$, the identity matrix.   Thus the composition $F_{A}\circ \mathrm{Id_{\mathbb C}}$ is an affine diffeomorphism whose Jacobian equals $A$.  
   
\begin{figure}[h]
\begin{tikzcd}[column sep=2pc,row sep= tiny]
 (\mathbb C, dz) \ar{r}{\mathrm{Id}_{\mathbb C}} & A\cdot (\mathbb C, dz)  \ar{r}{\mathrm{F}_A} &(\mathbb C, dz)\\
   \cup                                                                   &\cup                                                                  &\cup\\                       
 U \ar{dddd}{\varphi_U = \mathrm{Id}_U} \ar{r}                   &U\ar{dddd}{A \circ \varphi_U} \ar{r}                                &A(U)\ar{dddd}{\mathrm{Id}}\\
 &&&	\\
 &&&	\\
  &&&	\\
\mathbb C \supset U  \ar[dotted]{r}                       &A(U)\ar[dotted]{r}                                              &A(U)
\end{tikzcd} 
\caption{Local coordinate calculation shows that $F_{A}\circ \mathrm{Id_{\mathbb C}}$ has Jacobian $A$.   The  similarly defined map $f_A: \mathcal O \to \mathcal O$ is also of linear part $A$.}
\label{f:jacobiansForC}
\end{figure}

The above shows that whenever $\mathcal O_i$ corresponds to a removable singularity,  there is an affine diffeomorphism of it, say $f_i$, whose linear part is $A$.     Now, for any other index $i$,   we have $\mathrm{Id_{\mathcal O_i}}: \mathcal O_i \to A\cdot \mathcal O_i$ of linear part $A$.   Furthermore, similar to $F_A$ above, we have  $\widehat F_{A,i}: A\cdot \mathcal O_i \to  \mathcal O_i$ of trivial linear part.   Letting $\widehat F_{A}: A\cdot \mathcal O \to  \mathcal O$ be the map which restricts to $\widehat F_{A,i}$ on each $A\cdot \mathcal O_i$,  one finds that $f_A =  \widehat F_{A} \circ  \mathrm{Id_{\mathcal O}}$ is a  homeomorphism taking singularities to singularities and whose linear part  is $A$.  
\end{proof}

The locally length-minimizing paths on $\mathcal O$ are easily described.  
\begin{Lem}\label{l:geodesic}   For each $\mathcal O_i$, any two distinct points are connected by a unique geodesic.  Each such geodesic is either a straight line segment,  or a connected sequence of such segments meeting at the singularity with (minimum) angle at least $\pi$.  
\end{Lem}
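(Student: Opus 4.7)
The plan is to exploit the Euclidean-cone structure of $\mathcal{O}_i$. Writing $z = re^{i\theta}$ and substituting $\rho = r^{d_i+1}/(d_i+1)$, $\phi = (d_i+1)\theta$, the line element $|z|^{d_i}|dz|$ becomes the standard cone form $ds^2 = d\rho^2 + \rho^2\,d\phi^2$ on $\rho \geq 0$, $\phi \in \mathbb{R}/\Theta\mathbb{Z}$, where $\Theta := 2\pi(d_i+1)$. Thus $\mathcal{O}_i$ is an abstract Euclidean cone with apex at $0$ and total angle $\Theta \geq 2\pi$.

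If one endpoint is the apex, the pointwise bound $ds \geq |d\rho|$ shows at once that the radial segment to the other endpoint is the unique geodesic. For two non-apex points I will lift to the universal cover $\widetilde{C} = (0,\infty) \times \mathbb{R}$ carrying the metric $d\rho^2 + \rho^2\,d\tilde\phi^2$. The map $(\rho,\tilde\phi) \mapsto (\rho\cos\tilde\phi, \rho\sin\tilde\phi)$ is a local isometry onto $\mathbb{R}^2\setminus\{0\}$, so geodesics in $\widetilde{C}$ are lifts of planar straight segments avoiding $0$. Standard arguments show a geodesic in $\widetilde{C}$ between $(\rho_1, \tilde\phi_1)$ and $(\rho_2,\tilde\phi_2)$ exists if and only if $|\tilde\phi_2 - \tilde\phi_1| < \pi$ (otherwise the planar chord would cross the origin), and in that case is unique.

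Descending back to $\mathcal{O}_i$, the preimages of $p_2$ are at coordinates $\{(\rho_2,\phi_2 + k\Theta) : k \in \mathbb{Z}\}$, and since $\Theta \geq 2\pi$ the open interval $(\tilde\phi_1 - \pi, \tilde\phi_1 + \pi)$ of length $2\pi$ contains at most one such preimage. Letting $\alpha \in [0,\Theta/2]$ denote the cone-angular distance between $p_1$ and $p_2$, a unique straight-segment geodesic exists precisely when $\alpha < \pi$. When $\alpha \geq \pi$, the geodesic is instead the concatenation of the two radial segments into $0$; this qualifies as a local geodesic because at $0$ both interior angles $\alpha$ and $\Theta - \alpha$ are at least $\pi$ (the first by assumption; the second from $\alpha \leq \Theta/2$ and $\Theta \geq 2\pi$). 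I expect the main subtlety to be the boundary case $\alpha = \pi$, where the planar chord just touches the origin so that the straight-segment and through-apex descriptions coincide; the ``(minimum) angle at least $\pi$'' wording of the statement accommodates this, and the argument also rules out multiple apex visits since a geodesic from $0$ to $0$ must be constant.
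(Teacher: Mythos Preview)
Your argument is correct and self-contained, but it is a genuinely different route from the paper's.  The paper does not argue directly at all: it simply observes that $\mathcal{O}_i$ is a special case of a surface equipped with a quadratic differential and invokes Strebel's general results (Theorem~8.1 of \cite{Strebel} for the form of geodesics as broken straight segments meeting singularities at angle $\ge \pi$, and Theorem~14.2.2 together with Corollary~18.2 for uniqueness via simple connectivity).  Your approach instead identifies $\mathcal{O}_i$ explicitly as a Euclidean cone of total angle $\Theta=2\pi(d_i+1)\ge 2\pi$, passes to the universal cover of the punctured cone, and reduces everything to plane geometry.

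What each buys: the paper's proof is two sentences long but depends on a substantial external reference and does not illuminate why the cone-angle hypothesis $\Theta\ge 2\pi$ is exactly what forces uniqueness.  Your argument is elementary and makes that mechanism transparent---the interval $(\tilde\phi_1-\pi,\tilde\phi_1+\pi)$ can contain at most one lift of $p_2$ precisely because $\Theta\ge 2\pi$.  One point you leave implicit and might state explicitly: when $\alpha<\pi$ you have shown the straight segment is the unique \emph{non-apex} geodesic, but you should also note (via the law of cosines, or by observing the apex angle $\alpha<\pi$ allows corner-cutting) that the radial through-apex path is strictly longer, hence not a competitor.  Likewise for $\alpha\ge\pi$ you verify the through-apex path is a \emph{local} geodesic; global minimality and uniqueness follow because every path avoiding the apex lifts to $\widetilde{C}$ with angular span $\ge\pi$ and hence has length at least $\rho_1+\rho_2$, with equality only in the limit.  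These are the ``standard arguments'' you allude to; filling them in is routine.
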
 
\begin{proof}   
The description of the geodesics is a special case of Strebel's result for an arbitrary (half-)translation surface $(Y, \alpha)$, see Theorem~8.1 of \cite{Strebel}.     Since $\mathcal O_i$ is simply connected, any two points of $\mathcal O_i$ are connected by a unique geodesic (see \cite{Strebel} Theorem~14.2.2 and Corollary 18.2).  
\end{proof}

\begin{Lem}\label{l:surfaceIntoO} 
Suppose that $(X, \omega)$ is an element of $\mathcal H(d_1, \dots, d_{s})$.    Then there is an injective translation map of the disjoint union of  the open Voronoi 2-cells of $(X, \omega)$ into $\mathcal O$.   This map is unique, up to the action of $\mathrm{Trans}(\mathcal O)$ on the image. 
\end{Lem}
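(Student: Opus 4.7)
The plan is to construct the injection cell-by-cell, where for each singularity $\sigma_i$ of $(X,\omega)$ the open Voronoi 2-cell $V_i$ is mapped into $\mathcal{O}_i$ via an inverse of a type of exponential map at $\sigma_i$. The crucial geometric property is that $V_i$ is radially convex from $\sigma_i$: for each $p \in V_i$ the defining Voronoi inequality yields a unique shortest geodesic from $p$ to $\Sigma$, and this geodesic necessarily terminates at $\sigma_i$. Since the tangent cone $T_{\sigma_i}X$ is a flat metric cone of total angle $2(d_i+1)\pi$, it is isometric to $\mathcal{O}_i$ with $0 \leftrightarrow \sigma_i$, and any two such isometries respecting the translation structure differ by an element of $\mathrm{Trans}(\mathcal{O}_i)$ by Lemma~\ref{l:transGps}.

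Having fixed such an identification, I would define $\phi_i : V_i \to \mathcal{O}_i$ by sending $p$ to the endpoint of the ray in $\mathcal{O}_i$ emanating from $0$ with the same initial direction (in the identified tangent cones) and length as the unique shortest geodesic $\gamma_p$ from $\sigma_i$ to $p$; this ray is a straight segment and the unique geodesic with that data, by Lemma~\ref{l:geodesic}. Local isometry of $\phi_i$ is immediate, since near any $p \in V_i$ the map factors through a local translation chart of $X$ followed by a local translation chart on $\mathcal{O}_i$. Injectivity follows because $\phi_i(p) = \phi_i(q)$ forces $\gamma_p$ and $\gamma_q$ to have equal length and equal direction at $\sigma_i$, hence to coincide. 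Taking disjoint unions yields an isometric injection $\bigsqcup_i V_i \hookrightarrow \mathcal{O}$.

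For the uniqueness claim, the construction involved exactly two choices. The first is the rotational identification $T_{\sigma_i}X \cong \mathcal{O}_i$, parameterized by $\mathrm{Trans}(\mathcal{O}_i)$. The second is a choice, among those $\mathcal{O}_j$ with $d_j = d_i$, of which component receives the image of $V_i$; this gives a permutation freedom parameterized by $S_{n_1} \times \cdots \times S_{n_t}$. By Lemma~\ref{l:transGps}, these two kinds of freedom together generate $\mathrm{Trans}(\mathcal{O})$, so any two injections produced in this way differ by a unique element of $\mathrm{Trans}(\mathcal{O})$ acting on the image.

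The main obstacle I anticipate is verifying that the direction-and-length recipe really produces a well-defined and injective isometry on all of $V_i$, and not merely on a small neighborhood of $\sigma_i$ where the exponential map is tautologically an isometry. This should reduce to observing that $V_i$ is geodesically radially convex from $\sigma_i$, since any point on $\gamma_p$ is strictly closer to $\sigma_i$ than to any other singularity (so lies in $V_i$), together with the uniqueness of geodesics from a point in the simply connected model $\mathcal{O}_i$ provided by Lemma~\ref{l:geodesic}.
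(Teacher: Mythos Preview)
Your proposal is correct and follows essentially the same route as the paper: both construct the map by recording, for each $p$ in the Voronoi cell of $\sigma$, the direction and length of the unique shortest geodesic from $\sigma$ to $p$ (the paper calls this ``extended polar coordinates'', you call it an inverse exponential map via the tangent cone), and both derive uniqueness from the rotational ambiguity $\mathrm{Trans}(\mathcal O_i)$ together with the permutation ambiguity $S_{n_1}\times\cdots\times S_{n_t}$. The only cosmetic difference is that the paper sets up the local identification near $\sigma$ via ramified covering theory rather than tangent-cone language.
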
 
\begin{proof}  Enumerate the elements of  $\Sigma$ so that for each $i$,  $\sigma_i \in \Sigma$ has cone angle  $2 ( 1+d_i)\pi$.    Fix $i$ and for typographic ease   let    $\sigma =  \sigma_i$,   and  let  $\mathcal C_{\sigma}$ be the Voronoi 2-cell centered at $\sigma$.

   We can choose and fix a horizontal ray from the origin of $\mathcal O_i$ to measure angles from, and define a generalized system of polar coordinates in the obvious fashion.  That is,  every point other than the origin of $\mathcal O_i$ is uniquely represented by an ordered pair of positive real number and angle between $0$ and $2(d_i+1) \pi$.

We introduce a similar generalized polar coordinate system in $\mathcal C_{\sigma}$.   By definition,  the open set $\mathcal C_{\sigma}$ contains no other singularity, hence it contains  some  singular coordinate patch about $\sigma$.  By shrinking as necessary, this coordinate patch is a totally ramified covering of some disk centered at the origin of $\mathbb C$, of  degree $d_i+1$.   In this patch we can choose a lift of a ray along the positive $x$-axis and identify each point other than $\sigma$ by an ordered pair of positive real number and  angle between $0$ and $2(d_i+1) \pi$.   Since each point of $\mathcal C_{\sigma}$ is connected to $\sigma$ by a unique shortest path and these paths vary continuously in $\mathcal C_{\sigma}$,  we can extend the polar coordinates  to all of $\mathcal C_{\sigma}$.   The injective map $\iota_{\sigma}: \mathcal C_{\sigma} \to \mathcal O_i$  sending a point to the corresponding point with the same polar coordinates is then easily seen to be a translation map.

Now we let $\sigma = \sigma_i$ vary and define a map from the disjoint union of the Voronoi 2-cells to $\mathcal O$ to be given by applying the appropriate $\iota_{\sigma}$.   This is certainly an injective translation map. 
Given another such translation map, we can compose with an element of $S_{n_1}\times \cdots \times S_{n_t}$ to ensure that both send the Voronoi 2-cell of $\sigma_i$ to $\mathcal O_i$ for each $i$.      The only remaining freedom is accounted for by the action of each $\mathrm{Trans}(\mathcal O_i)$.   The result thus holds.  
\end{proof}

\subsection{Recovering $(X, \omega)$}  We give some details of the reconstruction of $(X,\omega)$ from the collection of the images on the canonical surface of the Voronoi 2-cells.\\

We fix some notation. 
\begin{Def}\label{d:basicNotationAtSings}  
We will assume throughout that some  injective translation map $\iota$ as above of the union of  the Voronoi 2-cells of $(X, \omega)$ into $\mathcal O$ has been fixed.  For each singularity $\sigma$ we let $\mathcal O_{\sigma} = \mathcal O_i$ where  $\iota$ sends $\sigma$ to the origin of $\mathcal O_i$.   As in the previous proof, we also let  $\mathcal C_{\sigma}$ be the Voronoi 2-cell centered at $\sigma$ and $\iota_{\sigma}: \mathcal C_{\sigma} \to    \mathcal O_{\sigma}$ the corresponding translation map.  Unless otherwise stated,  $0$ will denote the singularity of $\mathcal O_{\sigma}$.  \end{Def} 

\bigskip
 Recall that  $(X, \omega)$ is isometric to the quotient space of the disjoint union of the closure of its Voronoi cells  under the relation defined by the identification of shared edges.    Similarly, each $\iota(\mathcal C_{\sigma})$ has a collection of line segments comprising its closure which can be identified so as to recover $(X, \omega)$.   Note that for each $\sigma$, we have $\overline{\iota(\mathcal C_{\sigma})} = \overline{\iota_{\sigma}(\mathcal C_{\sigma})}$.

\begin{Prop}\label{p:reconstructingXomega}   Let $Y$ be the disjoint union of the various $\overline{\iota(\mathcal C_{\sigma})}$.    For each $\sigma$ any edge of $\mathcal C_{\sigma}$ can be uniquely identified with a line segment in $\overline{\iota(\mathcal C_{\sigma})}$ so that the two line segments corresponding to a  common edge of Voronoi 2-cells are translation equivalent  and such that the resulting equivalence relation on $Y$ results in a quotient metric space that is isometric to  $(X, \omega)$.   
\end{Prop}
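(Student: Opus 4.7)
The plan is to extend each $\iota_\sigma$ continuously to the closed cell $\overline{\mathcal C_\sigma}$, check that the extension sends every edge of $\mathcal C_\sigma$ to a straight line segment in $\overline{\iota(\mathcal C_\sigma)}$, verify that two such segments arising from a common Voronoi edge differ by a translation, and then transfer the standard edge identification on the disjoint union of the $\overline{\mathcal C_\sigma}$ (whose quotient is $(X,\omega)$) across to $Y$ via these isometries.

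For the extension, every point of $\mathcal C_\sigma$ is joined to $\sigma$ by a unique shortest geodesic, and by continuity this remains true (from the $\mathcal C_\sigma$ side) at each point of a bounding Voronoi 1-cell. The polar coordinates used to define $\iota_\sigma$ therefore extend continuously across the Voronoi 1-cells, and, approach-direction by approach-direction, to each 0-cell on the boundary. Since a Voronoi 1-cell of $(X,\omega)$ is by definition an open geodesic disjoint from $\Sigma$, its image under the extended $\iota_\sigma$ is a geodesic of $\mathcal O_\sigma$ that does not meet the singularity $0$; by Lemma~\ref{l:geodesic} such a geodesic must be a straight line segment. Appending the 0-cell limits yields a closed straight segment in $\overline{\iota(\mathcal C_\sigma)}$, and distinct edges of $\mathcal C_\sigma$ yield distinct segments because $\iota_\sigma$ is injective on the open cell and the boundary segments inherit its combinatorial cyclic ordering.

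For an edge $e$ shared by $\mathcal C_\sigma$ and $\mathcal C_{\sigma'}$, both $\iota_\sigma|_e$ and $\iota_{\sigma'}|_e$ are local isometries preserving the horizontal foliation, so the two image segments have identical length and identical holonomy vector when read in the standard planar identification $\mathcal O_i \setminus \{0\} \hookrightarrow \mathbb C$. They are therefore translates of one another in $\mathbb R^2$, which is the claimed translation equivalence. The resulting equivalence relation on $Y$ pulls back, via the $\iota_\sigma$, to exactly the standard shared-edge pairing on the disjoint union of the $\overline{\mathcal C_\sigma}$. Since each $\iota_\sigma$ is an isometry onto its image and isometries descend to isometries on the corresponding quotient metric spaces, the quotient of $Y$ is then isometric to $(X,\omega)$.

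The main subtlety I anticipate is the behavior at 0-cells, where a point $p$ on the boundary of $\mathcal C_\sigma$ may admit several distinct shortest paths to $\sigma$ lying in $\overline{\mathcal C_\sigma}$, forcing the polar-coordinate extension to be built one adjacent edge at a time; one must then verify that the collection of segment-endpoints so produced is compatible with the cyclic boundary structure of $\mathcal C_\sigma$ and with the gluing relation on $Y$.
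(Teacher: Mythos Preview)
Your argument is correct and reaches the stated conclusion, but it proceeds along a different line from the paper's proof. You invoke Lemma~\ref{l:geodesic} to see that the extended $\iota_\sigma$ sends each Voronoi $1$-cell to a straight segment in $\mathcal O_\sigma$, and you obtain translation equivalence abstractly from the fact that both $\iota_\sigma$ and $\iota_{\sigma'}$ act as translations in local charts (preserving the horizontal foliation), so their images of a common edge $e$ have identical holonomy. The paper instead works pointwise at an interior point $p\in e$: it compares the two polar-coordinate charts directly, observes that they differ by a translation on a small neighborhood, and then lets $p$ vary along $e$. The payoff of the paper's more hands-on approach is that it simultaneously identifies the point $w\in\mathcal O_\sigma$ to which $\sigma'$ is carried under this translation and shows that every $\hat p$ is equidistant from $0$ and $w$; hence $\hat e$ lies on the perpendicular bisector of the segment $[0,w]$. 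That perpendicular-bisector fact is not part of the statement of Proposition~\ref{p:reconstructingXomega}, but it is exactly what Propositions~\ref{p:goodNeighborhoods} and~\ref{p:fromTheEdges} (and the whole Voronoi-staple machinery) rely on. So your proof is a clean alternative for the proposition as stated, while the paper's proof is doing double duty by setting up the geometry needed downstream.
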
 
\begin{proof}    Fix $\sigma$ and choose any edge $e$ of $\mathcal C_{\sigma}$ as well as a point $p$  in the interior of $e$.  The polar coordinates of  $\mathcal C_{\sigma}$ extend to give local coordinates in some sufficiently small neighborhood of this regular point.    Since the inverse function $j$  
taking $\iota(\mathcal C_{\sigma})$ to $\mathcal C_{\sigma}$ is given by identifying points of the same polar coordinates, it extends linearly to include all of this neighborhood in its image.     Suppose that $e$ is shared with $\mathcal C_{\sigma'}$ and let $j'$ be the inverse function in this setting.  Polar coordinates here also give local coordinates in some sufficiently small common neighborhood $U$  of $p$.     These two local coordinates on $U \subset (X, \omega)$ differ by a translation,  from which it follows that $j^{-1}(U)$ is a translation equivalent copy of $(j')^{-1}(U)$ in $\mathcal O_{\sigma}$.

 Figure~\ref{f:edgesTranslEquiv} is related to the following construction.   Let $\hat{U}$ be the set taken by $j$ to $U$ and let $\hat{p} \in \hat{U}$ be the preimage of $p$. There is a unique shortest path from $\sigma$ to the regular point $p \in (X, \omega)$.   Correspondingly there is a   radial line from   $0$ to $\hat{p}$. As we vary $p\in U$ (while remaining in the interior of $e$), the  radial lines vary continuously.  We have the analogous situation with respect to the  shortest paths from $\sigma'$.      The translation equivalence can thus be extended to the set comprised of the radial lines in $\mathcal C_{\sigma'}$ (the lines translating `rigidly' with their endpoints).    Note that $\sigma'$ is sent to some regular point, say $w \in \mathcal O_{\sigma}$.       As well,   each of our  $\hat{p}$ has equal length straight line segments to $0$ and to $w$.     It follows that the set of these $\hat{p}$ is a line segment on the perpendicular bisector $L$ of the line segment joining $0$ and $w$.

  Varying $p$ shows that $j$ can be extended so that all of the interior of  $e$ has a well defined $j$-preimage, and that this preimage lies on $L$ and by continuity we find that $j$ sends  a line segment $\hat{e}$ on the boundary of $\overline{\iota(\mathcal C_{\sigma})}$  to $e$,  and that all of  $\hat{e}$ lies on $L$.   It also follows that $\hat{e}$ and the subset of the closure of   $\iota_{\sigma'}(\mathcal C_{\sigma'})$ corresponding to $e$ are translation equivalent.   Thus, the announced equivalence relation exists, and the quotient space of $Y$ is indeed isometric to $(X,\omega)$.  
\end{proof}  

When $\sigma$ is fixed we will continue to use $\hat{p}$ and $\hat{e}$  to denote the preimage of a point $p$ and of an edge $e$ respectively, under the extension $j$ of the inverse of $\iota_{\sigma}$.    We also introduce  another bit of notation. 

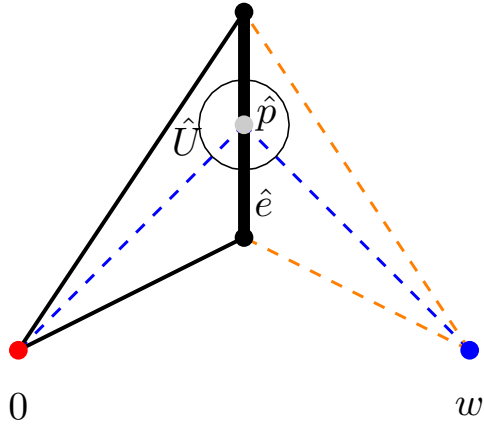
\begin{figure}
\scalebox{1.5}{
\begin{tikzpicture}
\draw [ line width=3]  (2,1) --  (2,3) node[xshift= 5, yshift = -48]{$\hat{e}$};
\draw (1.5, 1.9) node{$\hat{U}$};
\draw [ line width=0.75, blue, dashed]  (0,0) --  (2,2);
\draw [ line width=0.75, blue, dashed]  (4,0) --  (2,2);
\draw [ line width=1]  (0,0) --  (2,3);
\draw [ line width=1]  (0,0) --  (2,1);
\draw (2,2) node[scale=.5,circle,fill=black!20]{}; 
\draw (2.2,2.1)  node{$\hat{p}$};
\draw [black,domain=0:360] plot ({2 + 0.4*cos(\x)}, {2+ 0.4*sin(\x)});
\draw [orange, line width=.75, dashed]  (4,0) --  (2,3);
\draw [orange, line width=.75, dashed]  (4,0) --  (2,1);
\draw (2,1) node[scale=.5,circle,fill=black] {};
\draw (2,3) node[scale=.5,circle,fill=black] {};
\draw (0,0) node[scale=.5,circle,fill=red]{};
\draw (4,0) node[scale=.5,circle,fill=blue]{};
\draw (4, -0.5) node{$w$};
\draw (0, -0.5) node{0};
\end{tikzpicture}
}
\caption{Construction for Proposition~\ref{p:reconstructingXomega},   $\iota$-images of a common edge of two Voronoi 2-cells can be identified by a translation equivalence.    Here $\hat{p}$ is an interior point of a boundary edge $\hat{e}$ of $\iota(\mathcal C_{\sigma})$, $0$ denotes the origin of $\mathcal O_{\sigma}$,  $\hat{U}$ is a neighborhood of $\hat{p}$, $w$ is the reflection of $0$ through the line of $\hat{e}$.  Black lines outline a triangle in the closure of $\iota(\mathcal C_{\sigma})$. Blue dotted lines give the  equal length segments joining $\hat{p}$ to $0$ to $w$.} 
\label{f:edgesTranslEquiv}
\end{figure}

\begin{Def}\label{d:openDiskAboutPt}    For $z \in \mathcal O_{\sigma}\setminus \{0\}$,  let $D(z)$ be the open disk of center $z$ and radius equal to the distance from $0$ to $z$.   
\end{Def} 

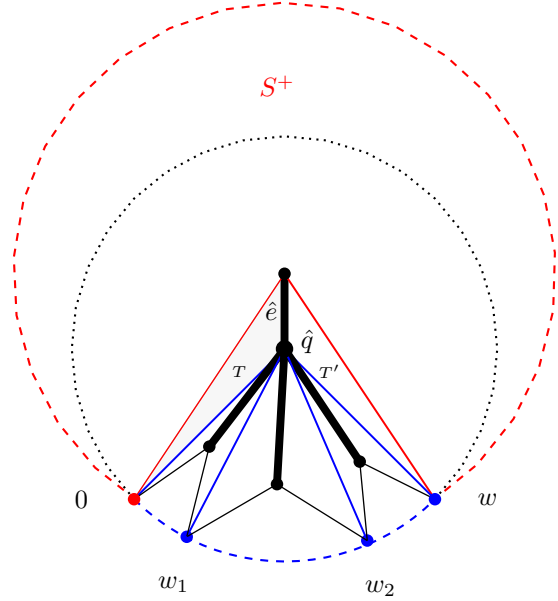
\begin{figure}
\scalebox{1}{
\begin{tikzpicture}
\draw [ line width=0, fill=black!10, opacity=0.3]  (0,0) -- (2,2) -- (2,3) -- cycle;
\draw [ line width=3]  (2,2) --  (2,3) node[xshift= -5, yshift = -14]{$\hat{e}$};
\draw [ line width=0.5, red]  (0,0) --  (2,3);
\draw (1.4, 1.7) node{\tiny{$T$}};
\draw [blue, line width=.75]  (0,0) --  (2,2);
\draw [red, line width=.75]  (4,0) --  (2,3);
\draw [blue, line width=.75]  (4,0) --  (2,2);
\draw (2.6, 1.7) node{\tiny{$T'$}};
\draw [ line width=3]  (2,2) --  (1,0.7); 
\draw (1,0.7) node[scale=.5,circle,fill=black] {};
\draw [blue, line width=.75]  (2,2) --  (0.7, -0.5) node[black, xshift= -5, yshift = -18]{$w_1$};
\draw (0.7, -0.5) node[scale=.5,circle,fill=blue] {};
\draw [black, line width=.5]  (1,0.7) --  (0.7, -0.5);
\draw [black, line width=.5]  (1,0.7) --  (0,0);
\draw [black, line width=.5]  (0.7, -0.5) --  (1.9,0.2);
\draw [ line width=3]  (2,2) --  (1.9,0.2); 
\draw (1.9,0.2) node[scale=.5,circle,fill=black] {};
\draw [ line width=3]  (2,2) --  (3,0.5);
\draw (3,0.5) node[scale=.5,circle,fill=black] {};
\draw [blue, line width=.75]  (2,2) --  (3.1, -0.55) node[black, xshift= 5, yshift = -18]{$w_2$};
\draw (3.1, -0.55) node[scale=.5,circle,fill=blue] {};
\draw [black, line width=.5]  (1.9,0.2) --  (3.1, -0.55);
\draw [black, line width=.5]  (3.1, -0.55) --  (3,0.5);
\draw [black, line width=.5]  (4, 0) --  (3,0.5);
\draw [blue,thick,dashed, domain=225:315] plot ({2 + 2*sqrt(2)*cos(\x)}, {2+ 2*sqrt(2)*sin(\x)});
\draw [thick,dotted, domain=-45:225] plot ({2 + 2*sqrt(2)*cos(\x)}, {2+ 2*sqrt(2)*sin(\x)});
\draw [red,thick,dashed, domain=-57:238] plot ({2 + sqrt(13)*cos(\x)}, {3+ sqrt(13)*sin(\x)});
\draw (2,2) node[scale=.7,circle,fill=black] {};
\draw (2,3) node[scale=.5,circle,fill=black] {};
\draw (0,0) node[scale=.5,circle,fill=red]{};
\draw (4,0) node[scale=.5,circle,fill=blue] {};
\draw (4.7, 0) node{$w$};
\draw (-0.7, 0) node{0};
\draw (2.3, 2.1) node{$\hat{q}$};
\draw [red] (1.9, 5.5) node{$S^{+}$};
\end{tikzpicture}
}
\caption{Voronoi 0-cell $q$ corresponds to an endpoint $\hat{q}$ of  a boundary line segment $\hat{e}$  of $\overline{\iota(\mathcal C_{\sigma})} \subset \mathcal O_{\sigma}$.    The proof of Proposition~\ref{p:reconstructingXomega}  shows that a reflection of the triangle $T$ through the line of $\hat{e}$ is translation equivalent to a triangle within a paired Voronoi 2-cell.  Proposition~\ref{p:goodNeighborhoods} shows that  the disk $D(\hat{q})$ is covered by translation equivalent copies of triangles in the $n+2$ (counted with possible repetition) Voronoi 2-cells on whose boundaries $q$ lies; pictured:  $n=2$.   The points $w, w_1, \dots,  w_n$ are the resulting images of the singularities.  The dashed blue arc with the black dotted arc form the boundary of $D(\hat{q})$, inside the disk is the sector $S_{-}$ (not labeled in figure).  The disk lies in the union of that sector and the sector $S^{+}$ (outer arc dotted in red) of the disk about the other end of $\hat{e}$.} 
\label{f:goodNeighborhood}
\end{figure}

\begin{Prop}\label{p:goodNeighborhoods}    Suppose that $e$ is an edge of $\mathcal C_{\sigma}$ and that $p \in e$.   Then $D(\hat{p})$  is contained in the union of  a finite number of  triangles in $\mathcal O_{\sigma}$, each of which is translation equivalent to a triangle whose vertices are the singularity and endpoints of one edge of the 2-cell of some Voronoi 2-cell of $(X, \omega)$.    In particular,  the inverse $j$ to $\iota_{\sigma}$ extends so as to take $D(\hat{p})$ to an open set of $(X, \omega)\setminus \Sigma$.
\end{Prop}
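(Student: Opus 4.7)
The plan is to develop the flat ball $B(p,r)\subset(X,\omega)$ isometrically onto $D(\hat p)$ and then cover it by images of a natural Voronoi triangulation of $(X,\omega)$.

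First I would identify the radius. Since $\iota_\sigma$ is an isometry, $r:=|\hat p|$ equals $d(p,\sigma)$, and because $p$ lies on a Voronoi edge shared with some neighboring cell $\mathcal C_{\sigma'}$, this common value coincides with $d(p,\Sigma)$. Consequently the open ball $B(p,r)\subset(X,\omega)$ is free of singularities, so it is flat and, being a geodesic ball in a flat surface, isometric to an open Euclidean disk of radius $r$; the disk $D(\hat p)\subset \mathcal O_\sigma\setminus\{0\}$ admits the same description, with the unique singularity $0$ of $\mathcal O_\sigma$ lying on its bounding circle but not in its interior.

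Next I would construct an isometry $\Phi\colon B(p,r)\to D(\hat p)$ that extends $\iota_\sigma$ on $\mathcal C_\sigma\cap B(p,r)$ and sends $p\mapsto\hat p$. For $x\in B(p,r)$ the straight geodesic from $p$ to $x$ traverses a finite chain of Voronoi cells $\mathcal C_{\sigma_0},\mathcal C_{\sigma_1},\dots$ with $\sigma_0=\sigma$; at each edge-crossing, Proposition~\ref{p:reconstructingXomega} supplies a Euclidean translation relating $\iota_{\sigma_{j+1}}$ to $\iota_{\sigma_j}$ near that edge, and composing these translations along the geodesic yields $\Phi(x)$. The main technical hurdle is proving that this prescription is path-independent; this is the standard developing-map argument for the simply-connected flat surface $B(p,r)$, with Proposition~\ref{p:reconstructingXomega} providing the required local consistency across Voronoi edges. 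I expect this well-definedness check to be the principal obstacle.

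With $\Phi$ in hand, I would use the triangulation of $(X,\omega)$ obtained by joining each singularity $\sigma_\bullet$ to the consecutive 0-cells on the boundary of $\mathcal C_{\sigma_\bullet}$ by shortest paths: every resulting triangle has one singularity vertex and two 0-cell vertices that are endpoints of a common edge of $\mathcal C_{\sigma_\bullet}$, which is exactly the form demanded by the proposition. Only finitely many such triangles $T$ meet $B(p,r)$. For each such $T$, the image $\Phi(T\cap B(p,r))$ is bounded by straight line segments (images of the sides of $T$) because $\Phi$ is a local isometry, and flatness of $T$ together with the fact that $\Phi$ differs from $\iota_{\sigma_\bullet}$ by a single Euclidean translation on the interior of $T$ lets me enclose this image in a triangle $\hat T\subset \mathcal O_\sigma$ translation-equivalent to $T$. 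The finitely many such $\hat T$ cover $\Phi(B(p,r))=D(\hat p)$, giving the main conclusion. For the ``in particular'' assertion, $\Phi^{-1}$ extends $j$ from a neighborhood of $\hat p$ in $\iota(\mathcal C_\sigma)$ to all of $D(\hat p)$, with image the open, singularity-free set $B(p,r)\subset(X,\omega)\setminus\Sigma$.
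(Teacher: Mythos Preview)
Your argument has a real gap at the step ``being a geodesic ball in a flat surface, isometric to an open Euclidean disk.''  The equality $r=d(p,\Sigma)$ ensures only that $B(p,r)$ contains no cone point; it says nothing about the injectivity radius of $(X,\omega)$ at $p$, so $B(p,r)$ need not be simply connected, let alone isometric to a disk.  For a concrete failure take the torus $\mathbb{R}^2/(\mathbb{Z}\times L\mathbb{Z})$ with $L>1$ and a single (removable) singularity at the origin---the paper explicitly allows this.  The point $p=(0,L/2)$ lies on a Voronoi edge with $r=L/2$, yet $B(p,r)$ contains the entire horizontal circle of length~$1$ and is therefore not simply connected.  Similar examples with genuine cone points occur on sufficiently elongated surfaces in any stratum.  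Once simple connectivity fails, ``the straight geodesic from $p$ to $x$'' is no longer unique, the path-independence you flag as the main hurdle genuinely breaks down, and no isometry $\Phi\colon B(p,r)\to D(\hat p)$ can exist.

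The remedy is to reverse the direction.  The disk $D(\hat p)\subset\mathcal O_\sigma\setminus\{0\}$ \emph{is} simply connected, so one should construct the extension of $j$ directly as a local isometry $D(\hat p)\to (X,\omega)\setminus\Sigma$, which may well fail to be injective---note that the proposition never claims injectivity.  The paper does exactly this, but constructively: starting from the triangle in $\overline{\iota(\mathcal C_\sigma)}$ with base $\hat e$, it iteratively attaches translated copies of Voronoi triangles around the images of successive $0$-cells, checks at each stage that the newly introduced singularity-images fall outside the relevant sector of $D(\hat q)$ (via the same comparison $r=d(p,\Sigma)$ you invoke), and uses a positive lower bound on triangle areas to obtain finiteness.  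Your idea of pulling back the Voronoi triangulation survives once the map goes the right way; equivalently, replace $B(p,r)$ throughout by the open $r$-disk in $T_pX$ under $\exp_p$, identified with $D(\hat p)$ via $d\iota_\sigma$.
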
 
\begin{proof}  We begin by showing  that the initial result holds for the endpoints of $e$.   Normalize  so that $\hat{e}$ is vertical and denote its  lowest point by $\hat{q}$,  see Figure~\ref{f:goodNeighborhood}.  Let $r$ be the distance from  $\hat{q}$ to $0$.   Now,  $\iota(\mathcal C_{\sigma})$ includes (the interior of) the triangle $T$ whose vertices are $0$ and the endpoints of $\hat{e}$.   The proof of Proposition~\ref{p:reconstructingXomega} shows that  the reflection of $T$ through $\hat{e}$, say  $T'$, is translation equivalent to a triangle  inside $\iota(\mathcal C_{\sigma'})$.      As in that proof, let $w$ be the reflection of $0$ through the line of $\hat{e}$.    Let $S_{-}$ be the lower sector of  $D(\hat{q})$ bounded by the radial lines ending at $0$ and $w$.     We aim to show that $S_{-}$ is contained in an appropriate union of triangles.

   Corresponding to $\hat{q}$ is an endpoint of $e$ say,   $q$.   There is a finite number of additional Voronoi 2-cells, say $\mathcal C_{\sigma_1}, \dots,  \mathcal C_{\sigma_n}$ (repetition allowed) such that $q$ is an endpoint of an edge of each;  enumerate  so that the corresponding edges are $e_1,\dots, e_{n+1}$  where   $e_1$ is shared by  $\mathcal C_{\sigma}$ and $\mathcal C_{\sigma_1}$,  $e_{n+1}$ is a shared  edge of $\mathcal C_{\sigma_n},\mathcal C_{\sigma'}$ and any other $e_i$ is shared by $\mathcal C_{\sigma_{i-1}},  \mathcal C_{\sigma_i}$.    Since  $q$ is a regular point,   the polar coordinates of $\mathcal O_{\sigma}$ allow the extension of the inverse map $j$ to include a neighborhood of $\hat{q}$.  As well, the directions of the various edges as given by geodesic segments emanating from $q$ are well defined.     This allows us to repeatedly use the construction of the proof of Proposition~\ref{p:reconstructingXomega}. Thus, for each $i$, we can translate the $\iota$-copy of a triangle  within  $\iota(\mathcal C_{\sigma_i})$ so that the vertices of the closure on $\mathcal O_{\sigma}$ are $\hat{q}$, the second endpoint  of  a line segment $\hat{e}_i$, and a point $w_i$ on the boundary arc of $S_{-}\,$.   In particular,  there is an open neighborhood of $\hat{q}$ contained in the union of all of the triangles on $\mathcal O_{\sigma}$.
   
   If $S_{-}$ is not already contained in the union of these triangles,  then    let $\hat{q}_i$ be the lower endpoints of the $\hat{e}_i$.    These also correspond to regular points,  and thus there are translates of closures of $\iota$-images of triangles which meet  at them.   Note that the reflections  involved place the images of the corresponding  singularities outside of $S_{-}\,$.   (Alternatively:  These images must be external to $S_{-}$, as if not then there is a straight path from $\hat{q}$ to such an image that gives rise to a path from $q$ to $\Sigma$ that is shorter than the radius of $S_{-}$, a contradiction.)     
    We can now continue this process, finding translated images of triangles so as to share edges, with singularities of images always external to $S_{-}\,$.   By the compactness of $(X, \omega)$ there are finitely many edges of Voronoi 2-cells, and thus the  triangles must have area bounded below by a positive constant.   Therefore,  allowing for ever more generations,  we must eventually have $S_{-}$ included in the union of a finite number of  our triangles on $\mathcal O_{\sigma}$.  
   
   We can now argue with the other endpoint of    $\hat e$ instead of $\hat q$.  Let $R$ be the distance from this point to $0$ and let $S^+$ be the sector of center this second point and radius   $R$  which is bounded by the radial lines to $0$ and to $w$.  By the arguments above $S^+$  is similarly covered by triangles.   By Euclidean geometry,  $r \le R + |\hat{e}|$  (where $|\cdot |$ denotes length).   It follows that  the disk $D(\hat{q})$ is contained in the union of $S_{-}$ and $S^+$.   By symmetry,  the disk of the other endpoint is also contained in the union of these sectors.    But, similar comparisons of lengths of paths show that $D(\hat{p}) \subset S_{-}\cup S^+$ also holds for points $\hat{p}$  lying in the interior of $\hat{e}$.      

   Finally, since none of the images of the singularities of the various triangles lies in $S_{-}\cup S^+$,   given  $\hat{p}$  the map $j$ extends so as to send $D(\hat{p})$ to an open subset of    $(X, \omega)\setminus \Sigma$.
   \end{proof}   

\bigskip

\subsection{Closed Voronoi 2-cells as convex bodies on $\mathcal O$}

We define key notions for the following.  
\begin{Def}\label{d:markedSegmentsAndOrientationPairs}    Any separatrix $\rho$ emanating from  $\sigma$  is such that the  map  $\iota$ sends $\rho \cap \mathcal C_{\sigma}$  to a line segment emanating from the origin of  $\mathcal O_{\sigma}$.   Let $\breve{\rho}$ be the open ray formed by stopping $\rho$ at the first singularity it encounters thereafter, if any such exists.    The linear continuation  of $\iota(\, \rho \cap \mathcal C_{\sigma}\,)$   whose length agrees with that of $\breve{\rho}$ then gives the unique isometric embedding of $\breve{\rho}$ into $\mathcal O$   agreeing with $\iota$ restricted to $\mathcal C_{\sigma}$.  We denote this image by $\hat{\iota}(\rho)$.  

   Given a  saddle connection $s$, we call   $\hat{\iota}(s)$ a  {\em marked segment} for $(X, \omega)$.   
\end{Def} 

\begin{Def}\label{d:halfSpace}  
Given a saddle connection $s$, let $\sigma$ be the initial endpoint of $s$.   We define  $\hat{s}$ to be the terminal endpoint of $\hat{\iota}(s)$.   We also define the {\em half-space} of $s$ to be
\[H_{\iota}(s) = \{z \in \mathcal O_{\sigma}\mid d(0, z) \le d(z, \hat{s})\},\]
where  $d(\cdot, \cdot)$ denotes the distance on $ \mathcal O_{\sigma}$ and (for simplicity's sake) $0$ denotes its origin, $\iota(\sigma)$.
\end{Def} 

\begin{Prop}\label{p:boundaryOfHalfSpace}  Given a saddle connection $s$ emanating from a singularity $\sigma$, let $s^\perp$ denote the perpendicular bisector of $\hat{\iota}(s)$.  Then $s^\perp$ is the boundary of $H_{\iota}(s)$.
\end{Prop}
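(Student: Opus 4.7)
Plan: Fix polar coordinates $(r,\theta)$ on $\mathcal{O}_\sigma$ centered at the apex $0$, with $\theta \in \mathbb{R}/(\Theta\mathbb{Z})$ for $\Theta := 2\pi(1+d_i)$, and normalize so that $\hat{\iota}(s)$ runs along the ray $\theta = 0$ with $\hat{s} = (L,0)$, where $L$ is the length of $s$. The strategy is to identify the equidistant locus $E := \{z \in \mathcal{O}_\sigma : d(0,z) = d(z,\hat{s})\}$ with $s^\perp$, then invoke continuity to conclude $\partial H_\iota(s) = E$.

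By Lemma~\ref{l:geodesic}, the radial segment from $0$ to any $z = (r,\theta) \neq 0$ is the unique geodesic, so $d(0,z) = r$. For $d(z,\hat{s})$, set $\theta_\ast := \min(\theta, \Theta - \theta)$ and distinguish two cases. If $\theta_\ast \le \pi$, then $z$ and $\hat{s}$ lie on the two boundary rays of a convex Euclidean sector of angular width $\theta_\ast$ obtained by unrolling $\mathcal{O}_\sigma$ between them, and the straight chord is a geodesic of length $\sqrt{r^2 + L^2 - 2rL\cos\theta_\ast}$ by the law of cosines. If $\theta_\ast > \pi$, both sectors between $z$ and $\hat{s}$ are reflex, so no straight segment avoiding $0$ can connect them; Lemma~\ref{l:geodesic} then forces the geodesic through $0$, giving $d(z,\hat{s}) = r + L$. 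In this latter case $d(0,z) = r < r + L = d(z,\hat{s})$, so such $z$ lie strictly inside $H_\iota(s)$ and contribute nothing to its boundary. In the former case, $d(0,z) = d(z,\hat{s})$ simplifies via $r^2 = r^2 + L^2 - 2rL\cos\theta_\ast$ to $r\cos\theta_\ast = L/2$. Recognizing $r\cos\theta_\ast$ as the Euclidean $x$-coordinate of $z$ in the unrolled chart that places $\hat{\iota}(s)$ along the positive $x$-axis, this is precisely the perpendicular bisector $\{x = L/2\}$ of $\hat{\iota}(s)$ in that chart; its two halves (with $\theta \in (0, \pi/2)$ and $\theta \in (\Theta - \pi/2, \Theta)$) are geodesic rays that meet smoothly at the midpoint of $\hat{\iota}(s)$, stay at distance at least $L/2$ from $0$, and extend indefinitely, together comprising $s^\perp$.

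Finally, the continuity of $d(0,\cdot)$ and $d(\cdot,\hat{s})$ makes $H_\iota(s)$ closed, and strict inequalities $d(0,z) < d(z,\hat{s})$ (for $z$ near $0$) and $d(0,z) > d(z,\hat{s})$ (for $z$ near $\hat{s}$) show that $E$ is exactly the topological boundary $\partial H_\iota(s)$. The principal subtlety is the case $\Theta > 2\pi$ (i.e., $d_i \ge 1$), where one must rule out wrap-around straight-line shortcuts between $z$ and $\hat{s}$; this is precisely what the $\theta_\ast > \pi$ branch handles by invoking Lemma~\ref{l:geodesic} to force geodesics through the apex.
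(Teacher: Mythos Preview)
Your proof is correct and follows essentially the same route as the paper's: both invoke Lemma~\ref{l:geodesic} to split into the case where the angle between the ray to $z$ and $\hat{\iota}(s)$ exceeds $\pi$ (forcing the geodesic through the apex, so $d(z,\hat s)=d(z,0)+d(0,\hat s)$ and $z$ lies in the interior of $H_\iota(s)$) and the case where it is at most $\pi$ (reducing to the Euclidean perpendicular-bisector statement). The only difference is stylistic---the paper dispatches the convex case by observing that the $2\pi$ sector about $\hat{\iota}(s)$ maps isometrically to $\mathbb{C}$ under the branched cover and then cites the classical fact, whereas you carry out the law-of-cosines computation explicitly and add a short continuity remark to pin down $\partial H_\iota(s)=E$.
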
 
\begin{proof}  Away from its origin,  $\mathcal O_{\sigma}$ has  a Euclidean metric, and thus  $s^\perp$ is indeed well-defined.  Similarly, in the case that $\sigma$ is a removable singularity the result is well known from elementary geometry.   

In all other cases, we invoke Strebel's description of geodesics, Lemma~\ref{l:geodesic}  above.  Given two points other than the origin,    if the radial segments having these points as endpoints are of angle less than $\pi$, then the geodesic between them is simply the straight line segment connecting them in a single coordinate chart.  If their angle is greater than $\pi$,  then the geodesic is the union of two radial line segments.

Now, if $\hat{\iota}(s)$ makes an angle greater than or equal to $\pi$ with the ray from the origin to $z\in \mathcal O_{\sigma}$, then  $d(z, \hat{s}) = d(z,0) + d(0, \hat{s})$ and certainly $z \notin \partial H_{\iota}(s)$.   All other $z$ lie within an angle of $\pi$ on either side of $\hat{\iota}(s)$.  The  ramified covering $\mathcal O_{\sigma}\to \mathbb C$ sends   each sector of angle $\pi$ isometrically to $\mathbb C$,  and the result then follows from the classical case. 
\end{proof}

\begin{Def}\label{d:intersectGivesOmega}  For a singularity $\sigma$ we define the  {\em convex body} of $\sigma$ to be
\[ \Omega_{\sigma} = \cap_{s}\, H_{\iota}(s)\,,\]
where $s$ runs through the set of saddle connections emanating from $\sigma$. 
\end{Def} 

Naturally enough,  we say that a subset of any  $\mathcal O_i$ is {\em convex} if it contains the straight line segment joining any two of its points.      

\begin{Prop}\label{p:fromTheEdges}  We have equality of sets  \[  \Omega_{\sigma}  = \overline{\iota(\mathcal C_{\sigma})}.\]  
Furthermore,  to each edge $e$ of $\mathcal C_{\sigma}$ is associated a saddle connection $s$ emanating from $\sigma$ such that the image of $e$ in $\overline{\iota(\mathcal C_{\sigma})}$ lies on $s^{\perp}$.   Letting $\mathcal E_{\sigma}^{\perp}$ denote the collection of all such saddle connections, 
\[  \Omega_{\sigma}  = \cap_{s\in \mathcal E_{\sigma}^{\perp}}\, H_{\iota}(s).\]   
\end{Prop}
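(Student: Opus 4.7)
The plan is to establish the circular chain
\[
\overline{\iota(\mathcal C_\sigma)} \;\subseteq\; \Omega_\sigma \;\subseteq\; \bigcap_{s\in\mathcal E_\sigma^\perp} H_\iota(s) \;\subseteq\; \overline{\iota(\mathcal C_\sigma)},
\]
from which both displayed equalities follow; the middle containment is immediate since $\mathcal E_\sigma^\perp$ sits inside the set of all saddle connections from $\sigma$. The association $e \leftrightarrow s$ defining $\mathcal E_\sigma^\perp$ is already implicit in Proposition~\ref{p:reconstructingXomega}: its proof exhibits each boundary segment $\hat e$ as lying on the perpendicular bisector of the line joining $0$ to the reflection $w$ of $0$ through the line of $\hat e$, and one identifies $w = \hat s$ for the saddle connection $s$ running from $\sigma$ through $e$ to $\sigma'$, placing $\hat e \subset s^\perp$.

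For $\overline{\iota(\mathcal C_\sigma)} \subseteq \Omega_\sigma$, I fix $z \in \overline{\iota(\mathcal C_\sigma)}$ with corresponding $p \in \overline{\mathcal C_\sigma}$ under the extension of $j = \iota_\sigma^{-1}$ to the boundary (guaranteed by Proposition~\ref{p:goodNeighborhoods}), and a saddle connection $s$ from $\sigma$ to some $\sigma'$. Writing $r = d(0,z) = d_X(p,\sigma)$ and $L = d(z,\hat s)$, I argue by contradiction, supposing $L < r$. Lemma~\ref{l:geodesic} forces the geodesic from $z$ to $\hat s$ in $\mathcal O_\sigma$ to be either broken through $0$ --- which yields $L = r + |s| > r$, impossible --- or a straight segment sitting inside the Euclidean sector of angle $2\pi$ about $\hat\iota(s)$ used in the proof of Proposition~\ref{p:boundaryOfHalfSpace}.

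Projecting this straight segment down to $X$ by local isometries starting at $p$ yields a geodesic $\eta$ defined until the first time $T \le L$ at which it meets a singularity. Since a translation surface has trivial linear holonomy, the developing map identifies the point $\hat s$ in $\mathcal O_\sigma$ with $\sigma'$ in $X$; hence if $T = L$ then $\eta$ strikes $\sigma'$ itself at time $L$, while if $T < L$ the straight segment has first passed through some other singularity-lift $\hat t$ (for a saddle connection $t$ from $\sigma$ to a singularity $\sigma_t$), in which case $\eta$ strikes $\sigma_t$ at time $T$. Either way some singularity $\sigma_*$ satisfies
\[
d_X(p, \sigma_*) \;\le\; T \;\le\; L \;<\; r \;=\; d_X(p,\sigma),
\]
strictly contradicting the Voronoi inequality $d_X(p,\sigma) \le d_X(p,\sigma_*)$ from $p \in \overline{\mathcal C_\sigma}$. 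Hence $L \ge r$ and $z \in H_\iota(s)$.

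For $\bigcap_{s\in\mathcal E_\sigma^\perp} H_\iota(s) \subseteq \overline{\iota(\mathcal C_\sigma)}$, the closure $\overline{\iota(\mathcal C_\sigma)}$ is star-shaped about $0$, as the polar coordinates at $\sigma$ extend radially throughout $\mathcal C_\sigma$; by Proposition~\ref{p:reconstructingXomega} its boundary is a union of segments $\hat e$, each lying on some $s_0^\perp$ with $s_0 \in \mathcal E_\sigma^\perp$. Given $z$ in the intersection, the radial from $0$ through $z$ exits $\iota(\mathcal C_\sigma)$, by compactness of $(X,\omega)$, through some $\hat e \subset s_0^\perp$ at a point $q$; any point strictly past $q$ on the ray lies on the $\hat{s}_0$-side of $s_0^\perp$ and so escapes $H_\iota(s_0)$. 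The hypothesis $z \in H_\iota(s_0)$ therefore places $z$ at or before $q$ on the ray, that is, in $\overline{\iota(\mathcal C_\sigma)}$.

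The main obstacle is the projection argument in the first inclusion, specifically confirming that $\eta$ strikes $\sigma'$ at $T = L$ whenever it meets no singularity earlier. The translation-surface hypothesis --- trivial linear holonomy, not merely flatness with cone singularities --- is precisely what makes this identification canonical; it also absorbs cleanly the case where other singularity-lifts happen to sit inside the triangle with vertices $0$, $z$, and $\hat s$.
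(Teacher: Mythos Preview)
Your chain of inclusions is a sound plan, and the second and third links are fine.  The gap is in the first link, at the projection step.  You assert that the geodesic $\eta$ in $X$ (launched from $p$ in the direction of $[z,\hat s]$) must meet $\Sigma$ by time $T\le L$, and that $\eta(L)=\sigma'$ when no earlier hit occurs, invoking ``trivial linear holonomy'' and ``the developing map''.  But trivial linear holonomy only says that $\eta(L)$ and $\sigma'$ share the same developed image $\hat s\in\mathbb C$; since the developing map $\widetilde{X\setminus\Sigma}\to\mathbb C$ is not injective, this does not give $\eta(L)=\sigma'$.  More concretely, the only natural partial map back from $\mathcal O_\sigma$ to $X$ is the radial ``exponential'' $J(\rho,\theta)=$ endpoint of the separatrix of length $\rho$ in direction $\theta$, and $J$ is defined only on the star-shaped set $U$ where that separatrix has not yet hit $\Sigma$.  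One has $\eta=J\circ[z,\hat s]$ only while the segment remains in $U$.  If some other marked endpoint $\hat t'$ lies inside the triangle $\Delta(0,z,\hat s)$, the segment $[z,\hat s]$ crosses the \emph{shadow ray} beyond $\hat t'$ without passing through $\hat t'$ itself; past that crossing $\eta$ is no longer $J\circ\gamma$, and nothing forces $\eta$ to reach $\Sigma$ by time $L$.  So neither ``$T\le L$'' nor ``if $T<L$ then $[z,\hat s]$ passes through some $\hat t$'' is justified.

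The paper avoids this difficulty by reducing to a \emph{vertex}: if some $s^\perp$ cut into the convex polygon $\overline{\iota(\mathcal C_\sigma)}$, then an extreme point $\hat q$ would satisfy $d(\hat q,\hat s)<d(\hat q,0)$, i.e.\ $\hat s\in D(\hat q)$.  Proposition~\ref{p:goodNeighborhoods} has already extended $j$ isometrically to all of $D(\hat q)$, so the straight segment $[0,\hat s]$ (which lies in $\overline{S\cup S^+}$) is carried by $j$ to a genuine saddle connection terminating at $\sigma'$, yielding $d_X(q,\Sigma)<d_X(q,\sigma)$ --- the contradiction.  Your route can be salvaged by a finite recursion: whenever some $\hat t'\in\Delta(0,z,\hat s)$ obstructs the argument, pass to the smaller triangle $\Delta(0,z,\hat t')$, noting that $d(z,\hat t')<\max(r,L)=r$ and that only finitely many marked endpoints have length below $r+|s|$; the terminal triangle lies in $U$, and $J$ then supplies a path from $p$ to a singularity of length $<r$.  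But as written, the developing-map sentence does not close the gap.
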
 
\begin{proof}    We first show that $\overline{\iota(\mathcal C_{\sigma})} = \cap_{s\in \mathcal E_{\sigma}^{\perp}}\, H_{\iota}(s)$.   
Choose an edge $e$ of $ C_{\sigma}$,  and let $\sigma'$ be the singularity whose Voronoi 2-cell shares this edge.  Let $\hat{e}$ in $\overline{\iota(\mathcal C_{\sigma})}$ correspond to $e$ as above.      For $\hat{q}$ either endpoint of $\hat{e}$   by Proposition~\ref{p:goodNeighborhoods},    the polar coordinates inverse map $j$ extends to have domain including the open disk $D(\hat{q})$, which it takes to a  set of regular points of $(X, \omega)$.     With $w$ as above,   the  straight line from $0$ to $w$ has its interior contained in $D(\hat{q})$ (for at least one of the choices of $\hat{q}$) and since $j$ extends continuously to take $w$ to $\sigma'$,     there is a corresponding saddle connection $s$ on $(X, \omega)$ connecting  $\sigma$ to $\sigma'$  and such that $\hat{s} = w$.  Therefore,    $\hat{e}$ is in $s^{\perp}$.  From this it follows that $\overline{\iota(\mathcal C_{\sigma})} = \cap_{s\in \mathcal E_{\sigma}^{\perp}}\, H_{\iota}(s)$. 

  Certainly  $ \cap_{s\in \mathcal E_{\sigma}^{\perp}}\, H_{\iota}(s) \supset \Omega_{\sigma}$.   If the two convex sets were not equal then there would be some $s$ such that $s^{\perp}$ meets $\iota(\mathcal C_{\sigma})$ and in particular there must be some endpoint $\hat{q}$ of an edge $\hat{e}$ that is closer to $\hat{s}$ than to $0$.    But then $\hat{s} \in D(\hat{q})$.   From the proof of Proposition~\ref{p:goodNeighborhoods},    $D(\hat{q}) \subset S\cup S^+$  and since $0$ is on the boundary of this union,  the line segment from $0$ to $\hat{s}$ is contained in the closure of $S\cup S^+$.   Similarly to in the previous paragraph, $s$ is the image under the extension of $j$ of the line segment.    However,  this shows that the point $q$ corresponding to $\hat{q}$ is closer to $\Sigma$ than is $\hat{q}$ to $0$, a contradiction.   Thus, no such $s$ can exist. \end{proof}

\subsection{Voronoi staples determine $(X, \omega)$ }

We introduce several key sets for our considerations. 

\bigskip
 
\begin{Def}\label{d:voronoiStaples}  
Reversing orientation on any saddle connection $s$ of $(X, \omega)$ results in a saddle connection $s'$.   We say that $s, s'$ are {\em orientation-paired}, and denote the set of these pairs by $\mathcal{M}(X, \omega)$.  We then call $\hat{\iota}(s), \hat{\iota}(s')$   orientation-paired {\em marked segments}, and denote the set of  these pairs by $\widehat{\mathcal{M}}(X, \omega)$.      When $s \in \mathcal E_{\sigma}^{\perp}$,  let  $e$ denote the edge of $\mathcal C_{\sigma}$ lying on its perpendicular bisector, and $e'$ be $e$ with the opposite orientation.   Letting $\sigma'$ denote the singularity from which $s'$ emanates,  $e'$ is an edge of $\mathcal C_{\sigma'}$, from which it follows that  $s' \in \mathcal E_{\sigma'}^{\perp}\,$.    We then call $\{s, s'\}$ a  {\em Voronoi staple}, and denote the set of all Voronoi staples by $\mathcal S(X, \omega)$. The corresponding subset of $\widehat{\mathcal{M}}(X, \omega)$ is called the {\em marked Voronoi staples}, denoted  by $\widehat{\mathcal{S}}(X, \omega)$, see Figure~\ref{f:voronoiStaples}.
\end{Def}

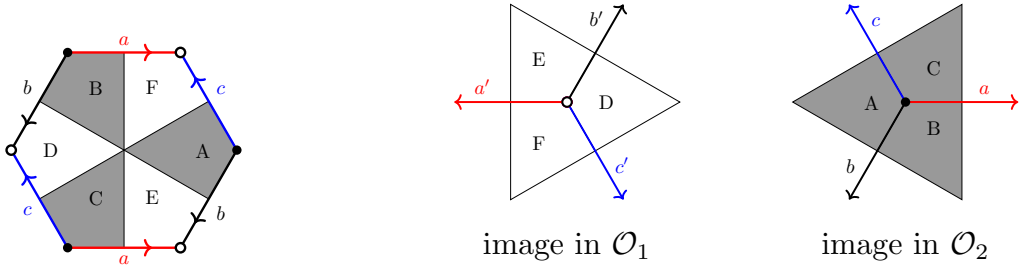
\begin{figure}[h!]
\scalebox{.75}{
\noindent
\begin{tabular}{lcr}
\begin{tikzpicture}
\draw (-0.3, 1.732) node{D};
\draw (1.5,0.9) node{E};
\draw (1.5,2.85) node{F};
\draw [fill=black!40]  (1,0) -- (1,1.732) -- (-.5,.5*1.732) -- (0,0) -- cycle node[midway, above, yshift=18]{C};
\draw [fill=black!40] (1,2*1.732) -- (1,1.732) -- (-.5,1.5*1.732) -- (0,2*1.732) -- cycle node[midway, above, yshift=-25]{B};
\draw [fill=black!40] (2.5,.5*1.732) -- (1,1.732) -- (2.5,1.5*1.732) -- (3,1.732) -- cycle node[midway, above, xshift=-10, yshift= 5]{A};
\draw [->-=.75, line width=1.2, red] (0,3.464) -- (2,3.464) node[midway, above] {$a$};  
\draw [->-=.75,line width=1.2] (3,1.732) -- (2,0) node[midway,below right] {$b$};
\draw [->-=.75,line width=1.2, blue] (0,0) -- (-1,1.732)  node[midway,below left] {$c$};
\draw [->-=.75, line width=1.2, red] (0,0) -- (2,0) node[midway,below] {$a$} node[draw,black, scale=.5,circle,fill=white] {};
\draw [->-=.75, line width=1.2] (0,3.464) -- (-1,1.732) node[midway,above left] {$b$} node[draw, black, scale=.5,circle,fill=white] {};
\draw [->-=.75,line width=1.2, blue] (3,1.732) -- (2,3.464) node[midway,above right] {$c$} node[draw, black, scale=.5,circle,fill=white] {};
\draw (0,0) node[scale=.5,circle,fill=black] {};
\draw (0,2*1.732) node[scale=.5,circle,fill=black] {};
\draw (3,1.732) node[scale=.5,circle,fill=black] {};
\end{tikzpicture}
& \phantom{tobeornottobethatis}
&
\begin{tikzpicture}   
\draw [fill=black!40] (4,1.732) -- (4,2*1.732) -- (2.5,1.5*1.732) -- (3,1.732) -- cycle node[midway, above, yshift=10]{C};
\draw [fill=black!40] (4,1.732) -- (4,0) -- (2.5,.5*1.732) -- (3,1.732) -- cycle node[midway, above, yshift=-20]{B};
\draw [fill=black!40] (2.5,.5*1.732) -- (1,1.732) -- (2.5,1.5*1.732) -- (3,1.732) -- cycle node[midway, above, xshift=-10, yshift= 5]{A};
\draw [->,line width=1] (3,1.732) -- (2,0) node[pos=0.8, above, xshift=-5] {$b$};
\draw [->, line width=1, blue] (3,1.732) -- (2,3.464) node[pos=0.7, above, xshift=5] {$c$}; 
\draw [->,line width=1, red] (3,1.732) -- (5,1.732) node[pos=0.7,  above] {$a$};
\draw (3, 1.732) node[scale=.5,circle,fill=black] {} node[below,  yshift=-60, scale = 1.6] {image in $\mathcal O_2$};
\draw (-2.3, 1.732) node{D};
\draw  [line width=0.5] (-3,1.732) -- (-2.5,1.5*1.732) -- (-1,1.732) -- (-2.5, 0.5*1.732) -- cycle;
\draw (-3.5, 1) node{F};
\draw  [line width=0.5] (-3,1.732) -- (-4,1.732) -- (-4,0)   -- (-2.5,0.5*1.732) -- cycle node[draw,scale=.5,circle,fill=white] {} node[below,  yshift=-60, scale = 1.6] {image in $\mathcal O_1$};  
\draw (-3.5,2.5) node{E};
\draw  [line width=0.5] (-4,1.732) -- (-4,2*1.732)   -- (-2.5, 1.5*1.732);  
\draw [<-,line width=1, blue] (-2,0) -- (-3,1.732) node[pos=0.2, xshift= 5,  above] {$c'$} node[draw,scale=.5,circle,fill=white] {};
\draw [<-,line width=1, red] (-5,1.732) -- (-3,1.732) node[pos=0.25, above] {$a'$}   node[draw,scale=.5,circle,fill=white] {} ;
\draw [<-,line width=1] (-2,3.464) -- (-3,1.732) node[pos=0.3, above, xshift = -5] {$b'$}  node[draw,scale=.5,circle,fill=white] {};
\end{tikzpicture}
\end{tabular}
}
\caption{Left:  Two distinct 2-cells comprising the Voronoi decomposition of the hexagonal torus subordinate to two removable singularities. Right:  The resulting image of the cells under $\iota$ as well as of the saddle connections  under $\hat{\iota}$.  The marked Voronoi staples --- $\{a, a'\}, \{b, b'\}, \{c,c'\}$ ---  show how to glue together the closure of the images of the 2-cells to obtain the surface.} 
\label{f:voronoiStaples}
\end{figure}

Our choice of terminology is to highlight the implication of  Propositions ~\ref{p:reconstructingXomega} and \ref{p:fromTheEdges}:    $(X, \omega)$  can be viewed as the disjoint union of the closures of the various $\Omega_{\sigma}$  ``stapled together" by identifying sides lying on perpendicular bisectors of members of the same marked Voronoi staple. 

\begin{Thm}\label{t:staplingTogetherVoronoi}  
Suppose that both $(X_1, \omega_1), (X_2, \omega_2)$ belong to $\mathcal H(d_1, \dots, d_{s})$ and choose maps $\iota_1, \iota_2$ to $\mathcal O$.   Then $(X_1, \omega_1), (X_2, \omega_2)$ are equivalent translation surfaces if and only if   $\widehat{\mathcal{S}}(X_1, \omega_1)$  and  $\widehat{\mathcal{S}}(X_2, \omega_2)$ are  in the same $\mathrm{Trans}(\mathcal O)$-orbit.
\end{Thm}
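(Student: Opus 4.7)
The plan is to prove both directions using the reconstruction picture provided by Propositions~\ref{p:reconstructingXomega} and \ref{p:fromTheEdges}, in which each $\overline{\iota(\mathcal C_{\sigma})}$ is recovered as an intersection of half-spaces indexed by $\mathcal E_{\sigma}^{\perp}$ and the surface is recovered by stapling these closed cells along pairs of perpendicular-bisector edges.

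For the forward direction, suppose $\Phi \colon (X_1, \omega_1) \to (X_2, \omega_2)$ is an equivalence of translation surfaces. Then $\Phi$ sends singularities to singularities while preserving cone angles, and by uniqueness of the Voronoi decomposition subordinate to $\Sigma$ it carries the open Voronoi 2-cells of $(X_1, \omega_1)$ bijectively onto those of $(X_2, \omega_2)$, taking edges to edges and preserving lengths and directions. Consequently it carries saddle connections bijectively to saddle connections, respecting the orientation-pairing and the subset $\mathcal E_{\sigma}^{\perp}$ at each singularity. Precomposing $\iota_2$ with $\Phi$ yields a second isometric embedding of the disjoint union of open Voronoi 2-cells of $(X_1, \omega_1)$ into $\mathcal O$, so by the uniqueness clause of Lemma~\ref{l:surfaceIntoO} this embedding differs from $\iota_1$ by an element $\varphi \in \mathrm{Trans}(\mathcal O)$. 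Applying $\hat{\iota}$ and using the fact that $\Phi$ preserves orientation-pairing then yields $\varphi \cdot \widehat{\mathcal{S}}(X_1, \omega_1) = \widehat{\mathcal{S}}(X_2, \omega_2)$.

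For the converse, I show that the marked Voronoi staples determine $(X,\omega)$ up to equivalence (relative to the chosen $\iota$). Each $\widehat{\mathcal{S}}(X, \omega)$ lists, for every $\sigma$, exactly the marked segments $\hat{\iota}(s)$ with $s \in \mathcal E_{\sigma}^{\perp}$ (together with their partners in the opposite cell), so by Proposition~\ref{p:fromTheEdges} the intersection $\bigcap_{s \in \mathcal E_{\sigma}^{\perp}} H_{\iota}(s)$ is determined by $\widehat{\mathcal{S}}(X, \omega)$ and equals $\overline{\iota(\mathcal C_{\sigma})}$. The staple pairing supplies a translation equivalence between each edge of $\overline{\iota(\mathcal C_{\sigma})}$ lying on the perpendicular bisector of $\hat{\iota}(s)$ and the edge of $\overline{\iota(\mathcal C_{\sigma'})}$ lying on the perpendicular bisector of $\hat{\iota}(s')$. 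By Proposition~\ref{p:reconstructingXomega}, $(X, \omega)$ is isometric to the quotient of the disjoint union of these closed convex bodies by the equivalence relation generated by those translations. If now $\varphi \in \mathrm{Trans}(\mathcal O)$ carries $\widehat{\mathcal{S}}(X_1, \omega_1)$ onto $\widehat{\mathcal{S}}(X_2, \omega_2)$, then $\varphi$ sends each half-space $H_{\iota_1}(s)$ to a half-space appearing in the description of $(X_2, \omega_2)$, and therefore permutes the closed convex bodies in a way that respects the edge-identifications dictated by the staple pairings. This induces an isometry of the two quotient surfaces, i.e.\ an equivalence of translation surfaces.

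The chief obstacle is the claim, used in the converse, that the data $\widehat{\mathcal{S}}(X, \omega)$ suffices to recognize which marked segments belong to $\mathcal E_{\sigma}^{\perp}$ and how their perpendicular-bisector edges are to be glued: one must rule out the possibility that some extraneous saddle connection's perpendicular bisector cuts into $\overline{\iota(\mathcal C_{\sigma})}$, and also verify that the abstract pairing data $\{\hat{\iota}(s), \hat{\iota}(s')\}$ recovers the translation gluing of the corresponding edges. The first point is exactly the content of Proposition~\ref{p:fromTheEdges} (all non-$\mathcal E_{\sigma}^{\perp}$ half-spaces are redundant in defining $\Omega_{\sigma}$), while the second follows because the two edges lie on perpendicular bisectors of oppositely oriented marked segments of equal length, forcing the gluing to be the unique orientation-reversing translation identification used in Proposition~\ref{p:reconstructingXomega}.
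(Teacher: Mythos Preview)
Your proof is correct and follows essentially the same route as the paper's: both directions use Lemma~\ref{l:surfaceIntoO} for the forward implication (translation equivalence $\Rightarrow$ same $\mathrm{Trans}(\mathcal O)$-orbit) and the combination of Propositions~\ref{p:fromTheEdges} and~\ref{p:reconstructingXomega} for the converse. The only cosmetic difference is that the paper first normalizes by replacing $\iota_2$ with its composition by the $\mathrm{Trans}(\mathcal O)$ element so that the two marked-staple sets coincide, whereas you carry the element $\varphi$ through the argument; these are equivalent.
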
 
\begin{proof}  First suppose that there is an element of $\mathrm{Trans}(\mathcal O)$ taking $\hat{\iota}_2(\, \mathcal S(X_2, \omega_2)\,)$ to $\hat{\iota}_1(\, \mathcal S(X_1, \omega_1)\,)$.   We can replace   $\iota_2$ by its composition with this map and assume equality of these images.   Proposition~\ref{p:fromTheEdges} then shows that for each singularity $\sigma_1$ of $(X_1, \omega_1)$ there is a singularity $\sigma_2$ of  $(X_2, \omega_2)$  so that $\Omega_{\sigma_1} = \Omega_{\sigma_2}$.  Furthermore, since each identification of corresponding pairs of sides of the various  $\Omega_{\sigma_1}, \Omega_{\sigma'_1}$ is indexed by some Voronoi pair, we find also that the $\Omega_{\sigma_2}, \Omega_{\sigma'_2}$ are identically paired.   Therefore, the resulting surfaces are isometric and in particular translation equivalent.  Proposition~\ref{p:reconstructingXomega}   then gives that  $(X_1, \omega_1), (X_2, \omega_2)$ are  translation equivalent.

If $(X_1, \omega_1), (X_2, \omega_2)$ are equivalent translation surfaces, then there is a homeomorphism $f: (X_2, \omega_2) \to (X_1, \omega_1)$  that is, off of singularities,  a translation in local coordinates.  In particular,  $f$ maps Voronoi 2-cells to Voronoi 2-cells and sends $\mathcal S(X_2, \omega_2)$ to $\mathcal S(X_1, \omega_1)$.   
 The composition $\iota_1\circ f$ then restricts to a translation map of the Voronoi 2-cells of $(X_2, \omega_2)$ into $\mathcal O$; by Lemma~\ref{l:surfaceIntoO} there is an element of $\mathrm{Trans}(\mathcal O)$ which composed with $\iota_2$ equals $\iota_1\circ f$ on the Voronoi 2-cells of $(X_2, \omega_2)$.     Equality holds then also for the extended maps, and thus  $\widehat{\mathcal{S}}(X_1, \omega_1)$ (the image of the first) is in the $\mathrm{Trans}(\mathcal O)$ -orbit of $\widehat{\mathcal{S}}(X_2, \omega_2)$.   
\end{proof}

\bigskip 
\subsection{Membership criterion}
 
 The next result applies the above with $(X_1, \omega_1) = (X, \omega)$ and $(X_2, \omega_2) = A\cdot (X, \omega)$ to give a   criterion for $A$'s membership in $\mathrm{SL}(X,\omega)$.   For this,  we use the map $f_A: \mathcal O \to \mathcal O$ of Lemma~\ref{l:sl2IsInVeechGpOfCalO}.   
\begin{Prop}[Membership Criterion, initial form]\label{p:theCriterion}\label{p:memCrit} 
Fix $(X, \omega)$ and suppose $A \in \mathrm{SL}_2\mathbb{R}$.   Then $A \in  \mathrm{SL}(X,\omega)$ if and only if   $f_A$ sends $\widehat{\mathcal S}(X, \omega)$  to a subset of $\widehat{\mathcal M}(X, \omega)$, up to some element of $\mathrm{Trans}(\mathcal O)$. 
\end{Prop} 
\begin{proof}
We first establish an intermediate result for all  $A \in \mathrm{SL}_2\mathbb{R}$, confer Figure~\ref{f:findingMarkedSegments}.    For any $A \in \mathrm{SL}_2\mathbb{R}$,  let $\hat{\iota}_A$ be the (unique up to translations) analog of $\iota$ for $A\cdot(X, \omega)$. The map $\mathrm{Id}_X: A\cdot (X, \omega) \to  (X, \omega)$ has linear part $A^{-1}$, and sends the orientation-paired saddle connections of $A\cdot (X, \omega)$ set-wise to  those of $(X, \omega)$.   Hence $\hat{\iota} \circ \mathrm{Id}_X$ sends these to $\widehat{\mathcal M}(X, \omega)$. 
Composing further with $\mathrm{Id}_{\mathcal O}: {\mathcal O} \to  A\cdot{\mathcal O}$ (whose linear part is $A$) gives that the orientation-paired saddle connections of $A\cdot (X, \omega)$ are sent isometrically in pairs into $A\cdot \mathcal O$ by $\mathrm{Id}_{\mathcal O}\circ \hat{\iota} \circ \mathrm{Id}_X$.   Thereafter composing with $\widehat{F}_A$ sends these isometrically into $\mathcal O$.   Since  $f_A = \widehat{F}_A\circ \mathrm{Id}_{\mathcal O}$,  Lemma~\ref{l:surfaceIntoO} (arguing by the density of directions of saddle connections) shows that $f_A\circ \hat{\iota} \circ \mathrm{Id}_X$ is  $\hat{\iota}_A$, up to $\mathrm{Trans}(\mathcal O)$-equivalence. Thus, we may assume equality and hence that $\hat{\iota}_A$ sends  the orientation-paired saddle connections of $A\cdot (X, \omega)$   isometrically to $ f_A(\, \widehat{\mathcal M}(X, \omega)\,)$.  
\begin{figure}[h]
\begin{tikzcd}[column sep=2pc,row sep=2pc]
\mathcal M(X, \omega)\ar{d}{\hat{\iota} }                                                       &                                                                     &                                    &                  &\mathcal M(\, A\cdot (X, \omega)\,) \ar{d}{\hat{\iota}_A} \ar[swap]{llll}{\mathrm{Id}_{X}} \\
\widehat{\mathcal M}(X, \omega) \ar[hook, dashed]{r}&\mathcal O \ar{r}{\mathrm{Id}_{\mathcal O}}\ar[bend left=60, swap]{rr}{f_A}&A\cdot \mathcal O\ar{r}{\widehat F_{A}}&\mathcal O&\widehat{\mathcal M}(\, A\cdot (X, \omega)\,)\ar[hook', dashed]{l}
\end{tikzcd} 
\caption{Up to an element of $\mathrm{Trans}(\mathcal O)$, $\hat{\iota}_A$ sends    $\mathcal M(\,A\cdot (X, \omega)\,)$  to $ f_A(\, \widehat{\mathcal M}(X, \omega)\,)$.}
\label{f:findingMarkedSegments}
\end{figure}

\medskip  
\noindent 
($\Rightarrow$) Given $A \in  \mathrm{SL}(X,\omega)$, then also $A^{-1} \in  \mathrm{SL}(X,\omega)$. By Theorem~\ref{t:staplingTogetherVoronoi},   we may assume that $\hat{\iota} (\, \mathcal S(X, \omega)\,)$ and $\hat{\iota}_{A^{-1}} (\, \mathcal S(\, A^{-1}\cdot (X, \omega)\,)\,)$  are equal.   From the previous paragraph, this is a subset of $ f_{A^{-1}}(\, \widehat{\mathcal M}(X, \omega)\,)$. It follows that $f_{A}\circ\hat{\iota} (\, \mathcal S(X, \omega)\,)$ equals some subset of $\widehat{\mathcal M}(X, \omega)$.   This first direction hence holds.  
\medskip
 
\noindent 
($\Leftarrow$) Again since $ \mathrm{SL}(X,\omega)$ is a group, it suffices to show that $A \in \mathrm{SL}(X,\omega)$ if $f_{A^{-1}}(\,\widehat{\mathcal S}(X, \omega)\,) \subset \widehat{\mathcal M}(X, \omega)$.   To show this, we apply $f_A$ and  find $\widehat{\mathcal S}(X, \omega)
 \subset f_{A}(\,\widehat{\mathcal M}(X, \omega)\,) = \widehat{\mathcal M}(\,A\cdot(X, \omega)\,)$.    Any singularity $\sigma$ of $A\cdot (X, \omega)$ is identified with a fixed singularity of $(X, \omega)$, naturally also denoted by $\sigma$.    Since  $\Omega_{\sigma}(\,A\cdot (X, \omega)\,)$ is the intersection of the half-spaces for {\em all} marked segments associated to $\sigma$,  the convex body   given by the intersection over the $s \in \mathcal E_{\sigma}^{\perp}(X, \omega)$ contains 
 $\Omega_{\sigma}(\,A\cdot (X, \omega)\,)$.  By Proposition~\ref{p:fromTheEdges}, this larger convex body is  $\Omega_{\sigma}(X, \omega)$, and is a copy of the closure of the Voronoi 2-cell for $\sigma$ on  $(X, \omega)$.

      Summing over all $\sigma$ shows  the natural Lebesgue measure of $A\cdot (X, \omega)$ is  at most the measure of $(X, \omega)$.   However,  since $A \in \mathrm{SL}_2\mathbb{R}$, the measure of $A\cdot (X, \omega)$ is the same as that of $(X, \omega)$.   Therefore, equality of convex bodies holds for each $\sigma$.      We now apply  the construction of Proposition~\ref{p:reconstructingXomega} to each of the collections of $\Omega_{\sigma}(\,A\cdot (X, \omega)\,)$ and  $\Omega_{\sigma}(X, \omega)$.   
The fact that $f_A$ respects orientation-pairings assures that the resulting two translation surfaces are equivalent.   Therefore,  $A\cdot (X, \omega)$ and $(X, \omega)$ are translation equivalent and hence $A \in \mathrm{SL}(X,\omega)$. 
\end{proof}

\begin{Rmk}\label{r:ExactStabilizer}  
In his Ph.D. dissertation \cite{Edwards}, Edwards shows that the  $\mathrm{SL}_2\mathbb{R}$ action on $\mathcal O$ induces an action on $\mathrm{Trans}(\mathcal O)$-equivalent classes of $\mathbb Z_2$-paired subsets of $\mathcal O$, and that the stabilizer of the class of $\widehat{\mathcal M}(X, \omega)$ is  exactly $\mathrm{SL}(X,\omega)$. 
 \end{Rmk} 

\bigskip
 For ease, let us say that the {\em length} of a Voronoi staple is the length of the underlying saddle connection (with one of the possible orientations).  In the following, we assume that some $\iota: (X, \omega) \to \mathcal O$ has been chosen.  Recall that the maximum singular value of a real matrix $A$  is the maximum of length of the image of unit vectors under $A$ and thus gives a bound on  how much $A$ can expand lengths.

\begin{Def}\label{d:markedPairsOfBddLength}  For each positive real number $r$, let  $\mathcal P_r =  \mathcal P_r(X,\omega)$ be the set of orientation pairs of marked segments of length at most $r$.  
\end{Def} 

\begin{Cor}[Membership Criterion, with bounds]\label{c:svpAndCatchVorStaples}  Let $\ell$ be the 
maximal length of any Voronoi staple of $(X, \omega)$.  Suppose that $\nu \in [1, \infty)$ and  $A \in \mathrm{SL}_2\mathbb{R}$ has maximum singular value at most $\nu$.

Then $A \in  \mathrm{SL}(X,\omega)$ if and only if  $f_A$  maps the set of marked Voronoi staples of $(X,\omega)$  into  some $\mathrm{Trans}(\mathcal O)$-translate of $\mathcal P_{\nu \ell}$.
\end{Cor}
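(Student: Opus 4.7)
The plan is to derive this as a direct strengthening of the Membership Criterion (Proposition~\ref{p:theCriterion}), obtained by layering the singular value length bound on top of that equivalence.

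First I would record the elementary observation that marked segments are, by construction (Definition~\ref{d:markedSegmentsAndOrientationPairs}), straight line segments in the relevant $\mathcal O_\sigma$, and that the length of a marked segment equals the length of its underlying saddle connection. Hence the hypothesis on $\ell$ says precisely that every marked Voronoi staple in $\widehat{\mathcal S}(X,\omega)$ has length at most $\ell$. Next I would recall from Lemma~\ref{l:sl2IsInVeechGpOfCalO} that $f_A$ has linear part $A$; on each $\mathcal O_i$, away from the cone point, $f_A$ is realized in local coordinates as the linear map $A$, so it distorts lengths of straight segments by a factor bounded above by the maximum singular value $\nu$ of $A$. Translations by elements of $\mathrm{Trans}(\mathcal O)$ are isometries, hence do not affect lengths.

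For the $(\Leftarrow)$ direction, I observe that by Definition~\ref{d:markedPairsOfBddLength} one has $\mathcal P_{\nu\ell} \subseteq \widehat{\mathcal M}(X,\omega)$. Thus the hypothesis that $f_A(\widehat{\mathcal S}(X,\omega))$ lies in a $\mathrm{Trans}(\mathcal O)$-translate of $\mathcal P_{\nu\ell}$ is a strictly stronger statement than the hypothesis of Proposition~\ref{p:theCriterion}. Applying that proposition gives $A \in \mathrm{SL}(X,\omega)$.

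For the $(\Rightarrow)$ direction, assume $A \in \mathrm{SL}(X,\omega)$. Proposition~\ref{p:theCriterion} produces an element $\tau \in \mathrm{Trans}(\mathcal O)$ with $f_A(\widehat{\mathcal S}(X,\omega)) \subseteq \tau \cdot \widehat{\mathcal M}(X,\omega)$. It only remains to check the length bound. Any $\hat\iota(s) \in \widehat{\mathcal S}(X,\omega)$ is a straight segment of length $|s| \le \ell$, and its image under $f_A$ is, as above, a straight segment of length at most $\nu|s| \le \nu\ell$. Since $\tau$ acts as an isometry, the corresponding element in $\widehat{\mathcal M}(X,\omega)$ also has length at most $\nu\ell$, placing it in $\mathcal P_{\nu\ell}$ by Definition~\ref{d:markedPairsOfBddLength}.

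The only subtlety, and hence the main (mild) obstacle, is justifying the uniform length distortion bound: one must know that $f_A$ is genuinely linear in the Euclidean charts of $\mathcal O_i \setminus \{0\}$ so that the singular value bound applies globally to straight segments, regardless of which sheet of the ramified cover they traverse. This is built into the construction of $f_A$ in Lemma~\ref{l:sl2IsInVeechGpOfCalO}, where $f_A = \widehat F_A \circ \mathrm{Id}_{\mathcal O}$ acts on the underlying point set of each $\mathcal O_i$ exactly as $z \mapsto Az$ in local coordinates, so the standard bound $|Av| \le \nu|v|$ carries through unchanged.
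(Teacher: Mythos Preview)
Your argument is correct and follows exactly the route the paper takes: invoke Proposition~\ref{p:theCriterion} for the equivalence and then add the singular-value length bound to upgrade $\widehat{\mathcal M}(X,\omega)$ to $\mathcal P_{\nu\ell}$. The paper's own proof is in fact only a two-line sketch of this same idea, so your version is simply a fuller write-up of the intended argument.
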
 
\begin{proof}[Sketch]  We use Proposition~\ref{p:theCriterion}.  
Since  $f_A$ has linear part $A$, it expands lengths by at most the maximum singular value of $A$ and hence the image of any marked Voronoi staple can have length at most $\nu \ell$.    
\end{proof}

\begin{Eg}[Respecting orientation-pairing is crucial]\label{eg:MustRespectOrientationPairing}  It is important to note that an element $A \in \mathrm{SL}_2(\mathbb R)$ may take marked segments  to marked segments, but still fail the membership criterion, due to it not  respecting  orientation-pairings.   Figure~\ref{f:memberFailNoPairingRespect} illustrates this in the case of an $(X, \omega) \in \mathcal H(2)$, formed by glueing three unit squares together in the configuration shown and identifying opposite sides,   where $A=\left(\begin{smallmatrix}1 & 0\\ 1 & 1\end{smallmatrix}\right)$ sends the horizontal marked segment $s$ to the marked segment $\gamma$,   but does not send the paired $s'$ to $\gamma'$.   Nor is there any element of $\mathrm{Trans}(\mathcal O)$ that can resolve this.    However, the reader may wish to verify that $B=\left(\begin{smallmatrix}1 & 0\\ 2 & 1\end{smallmatrix}\right)$ does fulfill the membership criterion.
\end{Eg} 

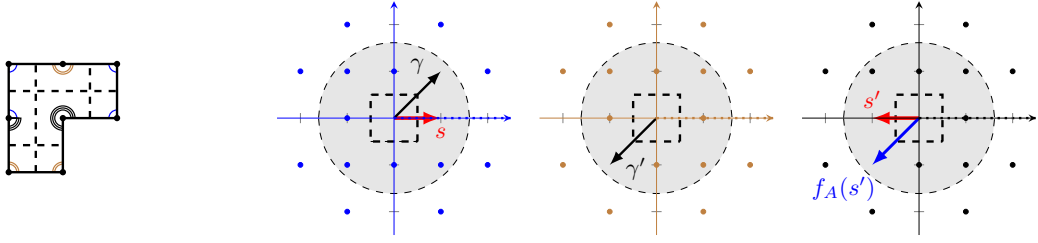
\begin{figure}[t]
\scalebox{0.9}{
\begin{tabular}{lccr}
\begin{minipage}{.25\textwidth}
\begin{tikzpicture}
\tikzmath{\x0=0; \y0=0; \scale=.8; \sectorscale=.15*\scale;}
\draw [line width=.5, blue] (\x0+\sectorscale\x0,\y0+\scale) arc (0:90:\sectorscale) -- (\x0,\y0+\scale) -- cycle;
\draw [line width=.5, blue] (\x0+2*\scale,\y0+\scale+\sectorscale\y0) arc (90:180:\sectorscale) -- (\x0+2*\scale,\y0+\scale) -- cycle;
\draw [line width=.5, blue] (\x0+2*\scale-\sectorscale\x0,\y0+2*\scale) arc (180:270:\sectorscale) -- (\x0+2*\scale,\y0+2*\scale) -- cycle;
\draw [line width=.5, blue] (\x0,\y0+2*\scale-\sectorscale\y0) arc (270:360:\sectorscale) -- (\x0,\y0+2*\scale) -- cycle;
\draw [line width=.5, brown] (\x0+\sectorscale\x0,\y0) arc (0:90:\sectorscale) -- (\x0,\y0) -- cycle;
\draw [line width=.5, brown] (\x0+1.25*\sectorscale\x0,\y0) arc (0:90:1.25*\sectorscale) -- (\x0,\y0) -- cycle;
\draw [line width=.5, brown] (\x0+\scale,\y0+\sectorscale\y0) arc (90:180:\sectorscale) -- (\x0+\scale,\y0) -- cycle;
\draw [line width=.5, brown] (\x0+\scale,\y0+1.25*\sectorscale\y0) arc (90:180:1.25*\sectorscale) -- (\x0+\scale,\y0) -- cycle;
\draw [line width=.5, brown] (\x0+\scale-\sectorscale\x0,\y0+2*\scale) arc (180:360:\sectorscale) -- cycle;
\draw [line width=.5, brown] (\x0+\scale-1.25*\sectorscale\x0,\y0+2*\scale) arc (180:360:1.25*\sectorscale) -- cycle;
\draw [line width=.5] (\x0+\scale+\sectorscale\x0,\y0+\scale) arc (0:270:\sectorscale) -- (\x0+\scale,\y0+\scale) -- cycle;
\draw [line width=.5] (\x0+\scale+1.25*\sectorscale\x0,\y0+\scale) arc (0:270:1.25*\sectorscale) -- (\x0+\scale,\y0+\scale) -- cycle;
\draw [line width=.5] (\x0+\scale+1.5*\sectorscale\x0,\y0+\scale) arc (0:270:1.5*\sectorscale) -- (\x0+\scale,\y0+\scale) -- cycle;
\draw [line width=.5] (\x0+\sectorscale\x0,\y0+\scale) arc (360:270:\sectorscale) -- (\x0,\y0+\scale) -- cycle;
\draw [line width=.5] (\x0+1.25*\sectorscale\x0,\y0+\scale) arc (360:270:1.25*\sectorscale) -- (\x0,\y0+\scale) -- cycle;
\draw [line width=.5] (\x0+1.5*\sectorscale\x0,\y0+\scale) arc (360:270:1.5*\sectorscale) -- (\x0,\y0+\scale) -- cycle;
\draw [line width=1] 
(\x0,\y0) node[scale=\scale*.35,circle,fill=black] {} -- 
(\x0+\scale,\y0) node[scale=\scale*.35,circle,fill=black] {} -- 
(\x0+\scale,\y0+\scale) node[scale=\scale*.35,circle,fill=black] {} -- 
(\x0+2*\scale,\y0+\scale) node[scale=\scale*.35,circle,fill=black] {} -- 
(\x0+2*\scale,\y0+2*\scale) node[scale=\scale*.35,circle,fill=black] {} -- 
(\x0+\scale,\y0+2*\scale) node[scale=\scale*.35,circle,fill=black] {} -- 
(\x0,\y0+2*\scale) node[scale=\scale*.35,circle,fill=black] {} -- 
(\x0,\y0+\scale) node[scale=\scale*.35,circle,fill=black] {} -- 
cycle;
\draw [line width=1, black, dashed ] (\x0+0.5*\scale,\y0) -- (\x0+0.5*\scale,\y0+2*\scale);
\draw [line width=1, black, dashed] (\x0+1.5*\scale,\y0 + \scale) -- (\x0+1.5*\scale,\y0+2*\scale);
\draw [line width=1, black, dashed] (\x0,\y0+0.5*\scale) -- (\x0+\scale,\y0+0.5*\scale);
\draw [line width=1, black, dashed] (\x0,\y0 +1.5*\scale) -- (\x0+2*\scale,\y0+1.5*\scale);
\end{tikzpicture}
\end{minipage}
&
\begin{minipage}{.2\textwidth}  
\centering
\begin{tikzpicture}
\begin{axis}
[blue, 
axis on top,
axis equal image,
axis lines=center,
grid style=dashed,
width=2\textwidth,
ymax=2.5, ymin=-2.5, xmax=2.5, xmin=-2.5, ytick={-2,-1,...,2}, xtick={-2,-1,...,2}, xticklabels={,,}, yticklabels={,,}]
\addplot [only marks,fill=blue,mark size=1, blue] coordinates {(2,1)  (1,2) (0,1) (-1,2) (-1,1) (-2,1) (-1,0) (-2,-1) (-1,-1) (-1,-2) (0,-1) (1,-2) (1,-1) (2,-1)};
\addplot [domain=-sqrt(2.6):sqrt(2.6),samples=100,dashed, black, fill=black!10] {sqrt(2.6-x^2)};
\addplot [domain=-sqrt(2.6):sqrt(2.6),samples=100,dashed, black, fill=black!10] {-sqrt(2.6-x^2)};
\addplot [domain=-0.5:0.5,samples=100, black, dashed] {0.5};
\addplot [domain=-0.5:0.5,samples=100, black, dashed] {-0.5};
\draw [line width=1, black, dashed] (axis cs: 0.5, -0.5) -- (axis cs:  0.5, 0.5)-- (axis cs:  -0.5, 0.5) -- (axis cs:  -0.5, -0.5) -- cycle;
\draw [->, >=latex,line width=1.5, red] (axis cs: 0, 0) -- (axis cs:  1, 0) node[below]{$s$};
\draw [->,>=latex, line width=1, black] (axis cs: 0, 0) -- (axis cs:  1, 1) node[midway, above, yshift=5]{$\gamma$};
\draw [line width=1, dotted, blue] (axis cs: 0.0, 0.0) -- (axis cs:  2.5, 0.0);
\end{axis}
\end{tikzpicture}
\end{minipage}
&\;\;\;\;\;
\begin{minipage}{.2\textwidth}  
\centering
\begin{tikzpicture}
\begin{axis}
[brown, axis on top,
axis equal image,
axis lines=center,
grid style=dashed,
width=2\textwidth,
ymax=2.5, ymin=-2.5, xmax=2.5, xmin=-2.5, ytick={-2,-1,...,2}, xtick={-2,-1,...,2}, xticklabels={,,}, yticklabels={,,}]
\addplot [only marks,fill=brown,mark size=1] coordinates {(1,0) (2,1) (1,1) (1,2) (0,1) (-1,2) (-1,1) (-2,1) (-1,0) (-2,-1)  (-1,-2) (0,-1) (1,-2) (1,-1) (2,-1)};
\addplot [domain=-sqrt(2.6):sqrt(2.6),samples=100,dashed, black, fill=black!10] {sqrt(2.6-x^2)};
\addplot [domain=-sqrt(2.6):sqrt(2.6),samples=100,dashed, black, fill=black!10] {-sqrt(2.6-x^2)};
\draw [line width=1, black, dashed] (axis cs: 0.5, -0.5) -- (axis cs:  0.5, 0.5)-- (axis cs:  -0.5, 0.5) -- (axis cs:  -0.5, -0.5) -- cycle;
\draw [->, >=latex,line width=1, black] (axis cs: 0, 0) -- (axis cs:  -1, -1) node[below, xshift=11, yshift=7]{$\gamma'$};
\draw [line width=1, dotted, brown] (axis cs: 0.0, 0.0) -- (axis cs:  2.5, 0.0);
\end{axis}
\end{tikzpicture}
\end{minipage}
&\;\;\;\;\; 
\begin{minipage}{.2\textwidth}   
\centering
\begin{tikzpicture}
\begin{axis}
[axis on top,
axis equal image,
axis lines=center,
grid style=dashed,
width=2\textwidth,
ymax=2.5, ymin=-2.5, xmax=2.5, xmin=-2.5, ytick={-2,-1,...,2}, xtick={-2,-1,...,2}, xticklabels={,,}, yticklabels={,,}]
\addplot [only marks,fill=black,mark size=1]       coordinates {(1,0) (2,1) (1,1) (1,2) (0,1) (-1,2) (-1,1) (-2,1)  (-2,-1)  (-1,-2) (0,-1) (1,-2) (1,-1) (2,-1)};
\addplot [domain=-sqrt(2.6):sqrt(2.6),samples=100,dashed, black, fill=black!10] {sqrt(2.6-x^2)};
\addplot [domain=-sqrt(2.6):sqrt(2.6),samples=100,dashed, black, fill=black!10] {-sqrt(2.6-x^2)};
\draw [line width=1, black, dashed] (axis cs: 0.5, -0.5) -- (axis cs:  0.5, 0.5)-- (axis cs:  -0.5, 0.5) -- (axis cs:  -0.5, -0.5) -- cycle;
\draw [->, >=latex, line width=1.5, red] (axis cs: 0, 0) -- (axis cs:  -1, 0) node[above]{$s'$};
\draw [->, >=latex,  line width=1.25, blue] (axis cs: 0, 0) -- (axis cs:  -1, -1) node[below, blue, xshift = -12, yshift=-1]{$f_A(s')$}; 
\draw [line width=1, dotted] (axis cs: 0.0, 0.0) -- (axis cs:  2.5, 0.0);
\end{axis}
\end{tikzpicture}
\end{minipage}
\end{tabular}
}  
\caption[]{Illustrating Example~\ref{eg:MustRespectOrientationPairing}.  Left:  Surface formed by identifying  opposite sides, each segment is of length one, is in $\mathcal H(2)$.   Voronoi partition indicated in black, dashed.  Angle measurement beginning at horizontal of left middle vertex (first $2 \pi$ in single blue, second in double brown, third in triple black rings).  Right:  The single connected component of $\mathcal O$, also of singularity of angle $6 \pi$, represented as three copies of $\mathbb C$ glued along the positive $x$-axis in cyclic order left to right.  Red arrows show one marked Voronoi staple $\{s, s'\}$.  Black arrows show one choice,  up to action of $\mathrm{Trans}(\mathcal O)$, for $\gamma = f_A(s)$  and its orientation paired marked segment, where  $A=\left(\begin{smallmatrix}1 & 0\\ 1 & 1\end{smallmatrix}\right)$.   Blue  arrow is $f_A(s')$.  Since $\gamma' \neq f_A(s')$ (and similarly for all $\mathrm{Trans}(\mathcal O)$-images  of $\gamma$), 
 $A$ fails the membership criterion.    Also indicated: dots give endpoints of other marked segments,    shaded disks are of radius  $\nu \ell$ with $\ell, \nu$ defined as in Corollary~\ref{c:svpAndCatchVorStaples}, of values here $\ell=1$ and $\nu = \nu(\,||A||) = \sqrt{(3+\sqrt{5})/2}$, see Definition~\ref{d:minSingValFun}.}
\label{f:memberFailNoPairingRespect}
\end{figure}

\section{Nesting hyperbolic polygons agreeing on ever larger balls} \label{s:Agree} 
The previous section presents theoretical results for determining elements of a specific discrete subgroup of $\mathrm{SL}_2\mathbb{R}$.   Here  we give results on using finite lists of elements of any given Fuchsian group  $\Gamma  \subset \mathrm{PSL}_2(\mathbb R)$ to determine aspects of a fundamental domain for the action of $\Gamma$ on the Poincar\'e upper half-plane model of  hyperbolic space, $\mathbb H$.     Recycling notation, in this section we use $H, \Omega$ to again indicate half-spaces and their intersections, but in the context of the  $\mathbb H$.   

\bigskip
\noindent
{\bf Convention.}  Throughout this section, we assume that $\Gamma$ has no non-trivial elements fixing $i \in \mathbb H$.  

\medskip
(Should in fact such elements exist, then since $\Gamma$ acts properly discontinuously,  conjugating this group by a sufficiently small translation must result in a group whose stabilizer of $i$ is trivial.   This was used to determine the translation surface underlying the computations for Figure~\ref{R_psi_hyperbolic_geometry_figure}.)  

\medskip
 The standard construction of the Dirichlet domain (centered at $z= i$) for a group $\Gamma$ respecting our convention is algorithmic: one takes the intersection of half-planes one at a time, see \cite{Katok} for a textbook discussion.     We thus contemplate taking the intersection of such half-planes for all elements of Frobenius norm of at most a given bound.  As the bound goes to infinity, these convex bodies nest down to the Dirichlet domain for $\Gamma$.

However, more is true.    We show that as the Frobenius norm increases there are ever larger hyperbolic balls within which these convex bodies already match the Dirichlet domain of the full group.    This allows us to use an area computation to detect if $\Gamma$ is generated by the elements whose norm is bounded by any explicit constant.    
 
\subsection{Bounded Frobenius norm elements give Dirichlet domain within bounded ball}
 
Let  $d_{\mathbb H}(z, w)$ denote the standard hyperbolic distance between two points in the upper half-plane.   For $A \in \mathrm{SL}_2\mathbb{R}$ let $||A||$ denote the Frobenius norm of $A$, that is $||A|| =\sqrt{ \mathrm{tr}(A^tA)}$.    It is well-known that the Frobenius norm of $A$ equals the square-root of the sum of the squares of the singular values of $A$.   Note also that the Frobenius norm  descends to give a well-defined function on   $\mathrm{PSL}_2\mathbb{R}$, which we also call the Frobenius norm.
 
\begin{Def}\label{d:minSingValFun}  
Define the function 
\[
\begin{aligned}  \nu: [\sqrt{2}, \infty) &\to    [1, \infty)\\
                                           a             &\mapsto \sqrt{\dfrac{1}{2} (a^2 + \sqrt{a^4-4}\,)}\,.
\end{aligned}
\]    
Note that $\nu(||A||)$ gives the maximum singular value of $A$.                                          
\end{Def}

     The next result is presumably well-known but we do not know of a source where it is explicitly stated.
  
\begin{Lem}\label{l:distanceToBdryAndSingValue}  Suppose that $A \in \mathrm{SL}_2\mathbb{R}\setminus  \mathrm{SO}_2\mathbb{R}$.    Then the hyperbolic distance from $i$ to $A\cdot i$ is given by 
\[d_{\mathbb H}(i, A \cdot i) = 2 \log \nu(\, ||A||\,).\]

In particular,   the hyperbolic ball centered at $i$ and of radius $r=  \log \nu(\, ||A||\,)$ is contained in the half-plane of elements that are nearer to $i$ than to $A\cdot i$, 
\[ H_i(A) \supset B(i, \log \nu(\, ||A||\,)).\] 

\end{Lem}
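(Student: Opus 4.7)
The plan is to compute $d_{\mathbb{H}}(i, A\cdot i)$ via the standard formula
\[\cosh d_{\mathbb{H}}(z_1,z_2) = 1 + \frac{|z_1-z_2|^2}{2\,\mathrm{Im}(z_1)\,\mathrm{Im}(z_2)},\]
show it equals $\tfrac{1}{2}||A||^2$, and then check that this is exactly $\cosh\bigl(2\log\nu(||A||)\bigr)$.

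Writing $A = \bigl(\begin{smallmatrix} a & b \\ c & d\end{smallmatrix}\bigr)$, first I would compute
\[ A\cdot i = \frac{ac+bd}{c^2+d^2} + \frac{i}{c^2+d^2},\]
using $ad-bc=1$. Plugging this into the distance formula and clearing denominators gives
\[ \cosh d_{\mathbb{H}}(i, A\cdot i) = \frac{(ac+bd)^2 + 1 + (c^2+d^2)^2}{2(c^2+d^2)}.\]
Next I would invoke the Lagrange-type identity
\[ (ac+bd)^2 + (ad-bc)^2 = (a^2+b^2)(c^2+d^2),\]
which combined with $ad-bc=1$ collapses the numerator to $(a^2+b^2+c^2+d^2)(c^2+d^2)$. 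After cancellation,
\[ \cosh d_{\mathbb{H}}(i, A\cdot i) = \frac{a^2+b^2+c^2+d^2}{2} = \frac{||A||^2}{2}.\]

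To finish the first assertion, set $r = 2\log \nu(||A||)$. Then $e^r = \nu(||A||)^2 = \tfrac{1}{2}\bigl(||A||^2 + \sqrt{||A||^4-4}\bigr)$, and a short computation (rationalizing) gives $e^{-r} = \tfrac{1}{2}\bigl(||A||^2 - \sqrt{||A||^4-4}\bigr)$, so $\cosh r = \tfrac{1}{2}(e^r+e^{-r}) = ||A||^2/2$. Since $A\notin\mathrm{SO}_2\mathbb{R}$ ensures $||A||>\sqrt 2$ and hence $r>0$, monotonicity of $\cosh$ on $[0,\infty)$ yields $d_{\mathbb{H}}(i,A\cdot i)=r$ as claimed.

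For the inclusion $B(i,\log\nu(||A||))\subset H_i(A)$: the half-plane $H_i(A)$ consists of points at least as close to $i$ as to $A\cdot i$. If $d_{\mathbb{H}}(z,i)<\log\nu(||A||)$, the reverse triangle inequality gives
\[ d_{\mathbb{H}}(z, A\cdot i) \geq d_{\mathbb{H}}(i, A\cdot i) - d_{\mathbb{H}}(z,i) > 2\log\nu(||A||) - \log\nu(||A||) = \log\nu(||A||) > d_{\mathbb{H}}(z,i),\]
so $z\in H_i(A)$.

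The only substantive step is the algebraic simplification; everything else is a routine identity check. I do not anticipate any real obstacle, though the cleanest path is to resist the temptation to expand $\nu$ symbolically and instead verify $\cosh(2\log\nu(a)) = a^2/2$ directly from the defining formula for $\nu$.
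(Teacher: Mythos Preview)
Your proof is correct. The paper does not actually supply a proof of this lemma: it is stated as ``presumably well-known'' and left without argument, so there is nothing to compare against at the level of approach.

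One remark on style: a slightly more conceptual route, which also explains \emph{why} the maximum singular value appears, is via the Cartan (KAK) decomposition. Write $A = K_1\,\mathrm{diag}(\sigma,\sigma^{-1})\,K_2$ with $K_1,K_2\in\mathrm{SO}_2\mathbb{R}$ and $\sigma\ge 1$ the maximum singular value. Since $K_2$ fixes $i$ and $K_1$ is an isometry,
\[
d_{\mathbb H}(i,A\cdot i)=d_{\mathbb H}(i,\mathrm{diag}(\sigma,\sigma^{-1})\cdot i)=d_{\mathbb H}(i,\sigma^2 i)=2\log\sigma = 2\log\nu(||A||),
\]
the last equality being the paper's observation that $\nu(||A||)$ \emph{is} the maximum singular value. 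Your direct computation via the $\cosh$ formula and the Lagrange identity reaches the same endpoint and has the virtue of being self-contained; the KAK route trades that for a one-line geometric argument once the decomposition is granted. Either is perfectly adequate here.
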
 

\begin{Def}\label{d:setGpElemsBddFrobNorm}  
For any positive real number $a$, let $\Gamma^a$ denote the set of all elements of $\Gamma$ whose Frobenius norm is at most $a$.      Furthermore, for any subset $S \subset \Gamma$, 
let $\Omega(\,S\,) = \cap_{A\in S}\, H_i(A)$.  Thus when $S$ is a subgroup of $\Gamma$,  we have that $\Omega(\,S\,)$ is its  
Dirichlet domain centered at $z=i$.
\end{Def}

  The very construction of Dirichlet domains shows that as $a$ increases,  the $\Omega(\,\Gamma^a\,)$ nest down to  $\Omega(\,\Gamma\,)$.  Since $a \mapsto    \log \nu(a)$ defines an increasing function,  the following shows that within ever growing balls the 
$\Omega(\,\Gamma^a \,)$ fully describe $\Omega(\,\Gamma\,)$. (Of course, neither must be completely contained in any such fixed ball.)
\begin{Prop}\label{p:agreeingWithinBalls}  For each $a \ge \sqrt{2}$,
\[ \Omega(\,\Gamma\,) \cap   B(i,  \log \nu(a)) =  \Omega(\,\Gamma^a\,)  \cap   B(i, \log  \nu(a)\,).\]
\end{Prop}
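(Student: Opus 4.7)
The inclusion $\Omega(\Gamma) \cap B(i, \log\nu(a)) \subset \Omega(\Gamma^a) \cap B(i, \log\nu(a))$ is immediate, since $\Gamma^a \subset \Gamma$ forces $\Omega(\Gamma) \subset \Omega(\Gamma^a)$ (intersecting over a larger index set gives a smaller convex body). So the entire content is in the reverse inclusion, and the plan is to prove it pointwise: fix $z \in \Omega(\Gamma^a) \cap B(i, \log\nu(a))$ and show $z \in H_i(A)$ for every $A \in \Gamma$.

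I would split the verification by the size of $\|A\|$. If $\|A\| \le a$, then $A \in \Gamma^a$, and $z \in H_i(A)$ by definition of $\Omega(\Gamma^a)$. If instead $\|A\| > a$, then $A \notin \mathrm{SO}_2\mathbb{R}$ (since $\|I\| = \sqrt{2} \le a$ and elements of $\mathrm{SO}_2\mathbb{R}$ have Frobenius norm $\sqrt{2}$, and moreover by the standing convention only the identity of $\Gamma$ fixes $i$), so Lemma~\ref{l:distanceToBdryAndSingValue} applies and gives
\[
H_i(A) \supset B\bigl(i, \log\nu(\|A\|)\bigr).
\]
Combining this with the monotonicity of $\nu$ on $[\sqrt{2}, \infty)$, one has $\log\nu(\|A\|) > \log\nu(a)$, hence $B(i, \log\nu(a)) \subset B(i, \log\nu(\|A\|)) \subset H_i(A)$; since $z \in B(i, \log\nu(a))$ by assumption, this yields $z \in H_i(A)$ as needed.

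The only step requiring a brief justification is the monotonicity of $\nu$, which I would verify directly from the formula $\nu(a)^2 = \tfrac{1}{2}(a^2 + \sqrt{a^4 - 4})$: each summand is increasing in $a$ on $[\sqrt{2}, \infty)$, so $\nu^2$ and hence $\nu$ is strictly increasing there. There is no genuine obstacle in this argument; the Proposition is essentially a packaging of Lemma~\ref{l:distanceToBdryAndSingValue} together with the observation that this lemma's bound is monotone in $\|A\|$, so that the half-plane $H_i(A)$ contains an entire ball around $i$ whose radius grows with $\|A\|$. This is exactly what makes it possible to certify membership in $\Omega(\Gamma)$ within a ball using only a finite (norm-bounded) list of group elements.
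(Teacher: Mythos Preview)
Your proof is correct and follows essentially the same approach as the paper's: both use the strict monotonicity of $a \mapsto \log\nu(a)$ together with Lemma~\ref{l:distanceToBdryAndSingValue} to show that every half-plane $H_i(A)$ with $\|A\| > a$ already contains the ball $B(i,\log\nu(a))$, so intersecting with these half-planes does not change anything inside the ball. Your write-up is more explicit (splitting into the two cases and spelling out the monotonicity of $\nu$), but the underlying argument is identical.
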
 
\begin{proof} Since $a \mapsto \log \nu( a )$ is a strictly increasing function, Lemma~\ref{l:distanceToBdryAndSingValue} implies that also if $a <||A||$ then $H_i(A) $ contains the ball $B(i, \log \nu(\, a\,))$.   Since $\Omega(\,\Gamma\,)$ is formed from  $\Omega(\,\Gamma^a\,)$ by intersection with all such   $H_i(A)$, with $A\in \Gamma$, the result holds.
\end{proof}

\begin{figure}[t]
\scalebox{.66}{
\includegraphics{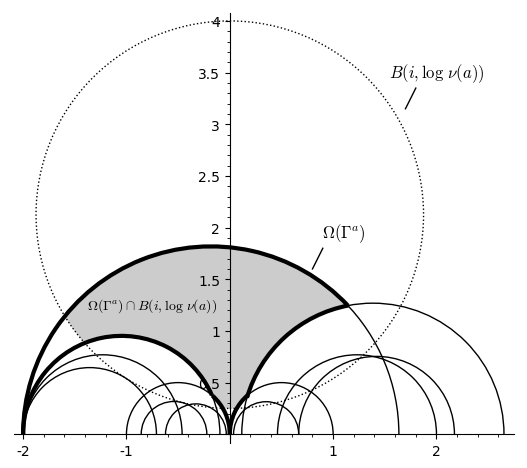}
}
\caption[a]{The shaded region shows where the computed intersection of half-planes from elements of small Frobenius norm must agree with the Dirichlet region of the full group, as guaranteed by Proposition~\ref{p:agreeingWithinBalls}.    Here,   $\Gamma =  \mathrm{SL}(M\cdot (X,\omega))$, where   $(X, \omega)$ is given in  Example ~\ref{eg:MustRespectOrientationPairing}, see also   Figure~\ref{f:memberFailNoPairingRespect}, and $M=\left(\begin{smallmatrix} 1 & 0\\ 1/2 & 1\end{smallmatrix}\right)$ is used to obtain a surface whose Veech group has trivial stabilizer of $z=i$.    Here $a = \sqrt{257}/4$.}
\label{R_psi_hyperbolic_geometry_figure}
\end{figure}

\bigskip
\subsection{Recognizing a generating set}
This subsection gives a theoretical result that leads to  a stopping condition for our algorithm to compute  $\mathrm{SL}(X, \omega)$, when this group is a lattice. 

\begin{Thm}\label{t:done!}
Suppose that $a\ge\sqrt{2}$   is such that 
\[ \mu_{_\mathbb H}(\, \Omega(\Gamma^a\,)\,) < 2\, \mu_{_\mathbb H}(\,  \Omega(\,\Gamma^a\,)  \cap   B(i,  \log  \nu(a)\,)   \,).\]
Then the subset $\Gamma^a$ generates  $\Gamma$.    In particular,  $\Gamma$ is a lattice.
\end{Thm}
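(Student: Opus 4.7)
The plan is to show that the subgroup $\Gamma' := \langle \Gamma^a \rangle$ actually equals $\Gamma$, and then read off the lattice property from the finiteness built into the hypothesis. I would argue by contradiction, squeezing the covolume of $\Gamma'$ between two incompatible bounds.

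First I would fix shorthand: $D := \Omega(\Gamma^a)$, $D^* := \Omega(\Gamma')$, $B := B(i, \log \nu(a))$, and $D_0 := D \cap B$. From $\Gamma^a \subseteq \Gamma' \subseteq \Gamma$ and the fact that intersecting more half-planes gives a smaller body, I obtain the nested convex regions $\Omega(\Gamma) \subseteq D^* \subseteq D$. Proposition~\ref{p:agreeingWithinBalls} identifies $\Omega(\Gamma) \cap B$ with $D \cap B = D_0$, and by sandwiching this forces $D^* \cap B = D_0$ as well. In particular $\mu_{\mathbb H}(D_0) \le \mu_{\mathbb H}(\Omega(\Gamma))$ and $\mu_{\mathbb H}(D^*) \le \mu_{\mathbb H}(D)$.

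Next, suppose for contradiction that $\Gamma' \neq \Gamma$, so that $[\Gamma : \Gamma'] \ge 2$. The standard covolume relation for Fuchsian subgroups --- equivalently, the statement that the natural map $\mathbb H / \Gamma' \to \mathbb H / \Gamma$ is a covering of degree $[\Gamma:\Gamma']$ --- yields
\[
\mu_{\mathbb H}(D^*) \;=\; [\Gamma:\Gamma']\cdot\mu_{\mathbb H}(\Omega(\Gamma)) \;\ge\; 2\,\mu_{\mathbb H}(\Omega(\Gamma)) \;\ge\; 2\,\mu_{\mathbb H}(D_0),
\]
the inequality remaining valid with the convention that both sides equal $\infty$ when either covolume is infinite. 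But the hypothesis gives $\mu_{\mathbb H}(D^*) \le \mu_{\mathbb H}(D) < 2\mu_{\mathbb H}(D_0)$, a contradiction. Hence $\Gamma' = \Gamma$, so $\Gamma^a$ does generate $\Gamma$. The lattice statement is then immediate: $\mu_{\mathbb H}(\Omega(\Gamma)) \le \mu_{\mathbb H}(D) < 2\,\mu_{\mathbb H}(B) < \infty$.

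The main delicate point is the covolume--index inequality $\mu_{\mathbb H}(D^*) \ge [\Gamma : \Gamma']\,\mu_{\mathbb H}(\Omega(\Gamma))$ in the generality where one or both covolumes might be infinite, or the index might be infinite. One can either cite this as a standard fact (e.g.\ from Katok), or give a self-contained verification: choose any $\gamma \in \Gamma \setminus \Gamma'$ and check that the interiors of $\Omega(\Gamma)$ and $\gamma\cdot\Omega(\Gamma)$ map to disjoint sets in $\mathbb H / \Gamma'$ off a measure-zero set (using the standing convention on stabilizers together with the fact that fixed-point loci of nontrivial Fuchsian elements are countable unions of hyperbolic geodesics and isolated points). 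This immediately doubles $\mu_{\mathbb H}(\Omega(\Gamma))$ inside any fundamental domain for $\Gamma'$, which is all that the argument actually requires.
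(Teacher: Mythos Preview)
Your argument is correct and follows essentially the same route as the paper: nest the convex bodies, invoke Proposition~\ref{p:agreeingWithinBalls} to bound $\mu_{\mathbb H}(\Omega(\Gamma))$ below by $\mu_{\mathbb H}(D_0)$, and then use the index--covolume relation to force index~$1$. The only cosmetic difference is that the paper passes through the subgroup $G$ generated by the side-pairings of $\Omega(\Gamma^a)$ rather than working directly with $\Gamma'=\langle\Gamma^a\rangle$; your version is a mild streamlining of the same idea.
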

\begin{proof}  Since    $\Gamma$ is a discrete group,  the subset $\Gamma^a$ is finite and hence $ \Omega(\,\Gamma^a\,)$ has finitely many sides.  By construction there is a finite  subset of $\Gamma^a$  pairing these sides. 
Let $G$ be the subgroup of $\Gamma$ generated by these side parings.  Then 
 
\[\mu_{_\mathbb H}(\Omega(G))\le \mu_{_\mathbb H}(\Omega(\Gamma^a)).\]

Proposition~\ref{p:agreeingWithinBalls} combines with this to give
\begin{align*}
\mu_{_\mathbb H}(\Omega(\Gamma))&\ge \mu_{_\mathbb H}(\,\Omega(\,\Gamma\,) \cap   B(i, \log \nu(a))\,)\\
&=\mu_{_\mathbb H}(\,\Omega(\,\Gamma^a\,) \cap   B(i,  \log \nu(a))\,)\\
&>\frac12 \mu_{_\mathbb H}(\Omega(\,\Gamma^a)\,)\\
&\ge\frac12\mu_{_\mathbb H}(\,\Omega(G)\,).
\end{align*}
Hence, the index of $G$ in the group $\Gamma$ is 
\[ |\Gamma:G|=\frac{\mu_{_\mathbb H}(\Omega(G))}{\mu_{_\mathbb H}(\Omega(\Gamma))}<2.\]
Therefore, $G=\Gamma$.  It follows that $\Gamma^a$ generates  $\Gamma$.

The initial hypothesis implies that $\mu_{_\mathbb H}(\, \Omega(\,\Gamma^a\,)\,)$ is finite.  We deduce that $\Gamma$ is finitely generated and has finite co-volume.  It is a lattice. 
\end{proof}
 
\section{From theory to algorithm}   

We sketch how the results above can be combined to give an algorithm to compute Veech groups.

\subsection{Basic steps of algorithm}\label{ss:algoOut} Given a polygonal presentation of $(X, \omega)$,  one can calculate to find a Voronoi decomposition.    Further calculation leads to the Voronoi staples, as one can explicitly calculate all saddle connections up to any given length.  Thus, the set $\widehat{\mathcal S}(X, \omega)$ and the value $\ell$ of  Corollary~\ref{c:svpAndCatchVorStaples} are knowable with finite calculation.    

   We test membership using Corollary~\ref{c:svpAndCatchVorStaples}.  We can form candidate matrices by choosing  of the saddle connections of two distinct saddle connections and a choice of image vectors.    Beginning with $a = \sqrt{2}$,  we choose image vectors from the holonomy vectors corresponding to elements of $\mathcal P_{\nu(a)\, \ell}$.   We then check whether the candidate matrix $A$ is such that $f_A$ does send  the marked Voronoi staples pairwise into orientation-paired marked segments, up to an element of  $\mathrm{Trans}(\mathcal O)$.   Exactly when this is the case is $A \in \mathrm{SL}(X,\omega)$.      We complete this round for all possible candidate matrices for this value of $a$.    We then form  $\Omega(\Gamma^a)$ and apply the obvious test implied by Theorem~\ref{t:done!}.   If this test is positive, then we have determined all of what is rightly denoted as  $\mathrm{PSL}(X, \omega)$.    (The only other way that our algorithm halts is by a user-entered upper bound on the Frobenius norm of elements to test.)    Otherwise, we increase $a$ (say by doubling its value).

\subsection{Some technical details}   We roughly indicate how some of the technical matters of implementing the algorithm are addressed.   
\begin{itemize}

\item{\bf From $\mathrm{PSL}(X, \omega)$ to $\mathrm{SL}(X, \omega)$.}  Passing from $\mathrm{PSL}(X, \omega)$ to  $\mathrm{SL}(X, \omega)$  is  merely a matter of determining if $-I_2 \in \mathrm{SL}(X, \omega)$.  Since $|| - I_2 || = \sqrt{2}$,  this is directly tested using Corollary~\ref{c:svpAndCatchVorStaples}.  
 \bigskip 

\item{\bf Finding a Veech group that trivially stabilizes $z=i$.}  The stabilizer inside of all of $\mathrm{SL}_2(\mathbb R)$ of $z=i$ is $\mathrm{SO}_2(\mathbb R)$, from which 
 a Fuchsian group $\Gamma$ follows our convention exactly if  $\Gamma^{\sqrt{2}}\cap  \mathrm{SO}_2(\mathbb R) \subset \{\pm I_2\}$.      Given a translation surface $(X, \omega)$,  we test for this by calculating $\mathrm{PSL}(X, \omega)^{\sqrt{2}}$. Upon failure,  we rather find an $M$ such that $M\cdot(X, \omega)$ trivially stabilizes $z=i$ and compute  $\mathrm{SL} (M\cdot(X, \omega)\,)$ with the algorithm as already sketched; thereafter, conjugating by $M^{-1}$ gives $\mathrm{SL}(X, \omega)$.  (Figure~\ref{R_psi_hyperbolic_geometry_figure} arises from a computation in such a setting.) It is the fact that $\mathrm{SL}(X, \omega)$ is discrete and hence that $\mathrm{PSL}(X, \omega)$ acts properly discontinuously on $\mathbb H$ (in particular, fixed points are isolated),  that implies the existence of some such $M$.   In fact,  there is some $n \in \mathbb N$ such that  $M = \left(\begin{smallmatrix} 1&0\\1/n&1\end{smallmatrix}\right)$ suffices.  (An iterative loop can determine the smallest value of $n$ for which this holds.)

 \bigskip
\item{\bf  ``Marked" data from $(X, \omega)$.}   Values of the map $\hat{\iota}$ to $\mathcal O$ are represented in terms of data structures reflecting the geometry of $(X, \omega)$:   We first choose an enumeration of the singularities;  as illustrated in Figure~\ref{f:memberFailNoPairingRespect}, for each singularity we choose one of the horizontal rays emanating from it, and measure angles beginning at that marked horizontal.  (The difference of choices of the horizontal  is accounted for by $\mathrm{Trans}(\mathcal O_i)$ as in Lemma~\ref{l:transGps}.)    Thereafter,  each saddle connection is recorded as a vector in the plane, along with  a record of the singularity from which it emanates,  as well as of which multiple of $2 \pi$ gives the copy of $\mathbb C$ in which its image lies (compare Figure~\ref{f:memberFailNoPairingRespect}),  and finally  the record includes a pointer to  its orientation-paired saddle connection.   
 
\medskip     

\item {\bf $\mathrm{SL}_2(\mathbb R)$ action on marked segments.}      When applying the membership criterion, it is crucial that one notes that for $A \in \mathrm{SL}_2(\mathbb R)$, $f_A$  can send a marked segment from one copy of $\mathbb C$ into one of the neighboring $\mathbb C$.   This is a matter of the vector being ``pushed" across the positive $x$-axis.  As an example, the reader can verify that in the setting of Example~\ref{eg:MustRespectOrientationPairing} and Figure~\ref{f:memberFailNoPairingRespect}  $f_A$ sends $1-i/2$ in the first copy of $\mathbb C$ into the second copy of $\mathbb C$. 
\end{itemize}

\section{Verified calculations}  

We have verified the results of running code based upon this algorithm for a collection of surfaces.   These include various ``L" surfaces,  particular (double) regular polygons, the famed ``eierliegende Wollmilchsau"  surface, and others, see \cite{Sanderson} for most of these.  

\subsection{An ``L"-surface}\label{ss:theL}  Here we content ourselves with a brief report on the calculation of the main running example of this paper.    For the translation surface $(X, \omega)$ of Example~\ref{eg:MustRespectOrientationPairing} and Figure~\ref{f:memberFailNoPairingRespect},  applying the algorithm, we found that $\mathrm{SL}(X, \omega)$ is generated by the matrices
\[  S =  \begin{pmatrix}
0 & -1 \\
1 & 0
\end{pmatrix},\,
A =  \begin{pmatrix}
2 & 1 \\
-1 & 0
\end{pmatrix}.\]

Note that 
$S$  squares to give $- I_2$ and that $S A$ is equal to $B =  \big( \begin{smallmatrix}
1 & 0 \\
2 & 1
\end{smallmatrix}\big)$ (confer the final sentence of Example~\ref{eg:MustRespectOrientationPairing} ).   We can easily recognize that the  ideal triangle of  vertices at $-1, 0, \infty$ is a fundamental domain for this group:   $S$ sends the $y$-axis to itself (fixing $z=i$) and   $A$ sends the geodesic of ``feet"  $-1$ and $0$ to the vertical line through $x= -1$.   Thus $\mathrm{PSL}(X, \omega)$ is a Fuchsian triangle group, of signature $(0; 2, \infty, \infty)$  --- it is of genus zero, has two cusps and one orbifold point of index $2$.  The fundamental domain has area $\pi$, and $\mathrm{SL}(X, \omega)$ is certainly a subgroup of $\mathrm{SL}_2(\mathbb R)$; our calculation thus accords with the first entry in the table of results in Section 4.2 of \cite{Schmithusen} (the ``L"-surface there is translation equivalent to our $(X, \omega)$\,).    

Note that  with $M$ as in the caption of Figure~\ref{R_psi_hyperbolic_geometry_figure},  $M^{-1}$ sends the Dirichlet domain centered at $z=i$ for $\mathrm{PSL}(M \cdot(X, \omega)\,)$ suggested in that figure to  the Dirichlet domain at $z= M^{-1}\cdot i$ for $\mathrm{PSL}(X, \omega)$.    This image is a pentagon with ideal vertices at $-1, 0$; the sides meeting at $-1$ are paired by  $A$, those meeting at $0$ by  $B$, the remaining side passes through $z=i$ and is paired with itself by $S$.   

\begin{figure}
\scalebox{.5}{
\includegraphics{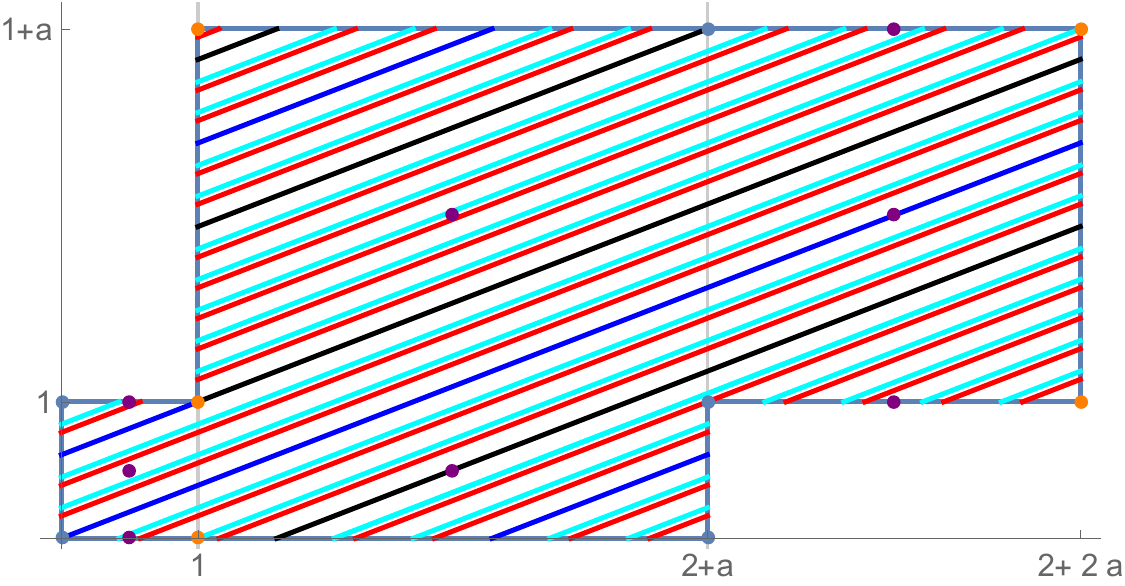}
}
\caption{Cylinder composition in a parabolic direction not in $\langle P_1, P_2 \rangle$ for   McMullen's $a=1+\sqrt{3}$ example of infinitely generated Veech group.  Here $P_1, P_2$ are parabolic elements corresponding to the vertical and horizontal directions.  The surface, formed by identifying opposite facing sides, is in $\mathcal H(1,1)$.  Marked points are the vertices, which give rise to the cone singularities, and the Weierstrass points, see \cite{McMullen}. }
\label{notInSmallSubgroup_figure}
\end{figure}
 
\subsection{Investigating an infinitely generated Veech group}\label{ss:InfGen}    McMullen \cite{McMullen} gave an infinite family of translation surfaces in $\mathcal H(1,1)$, each of whose Veech group is infinitely generated (an ``infinitely generated" group is one which cannot be finitely generated).    Our algorithm can of course find elements in order of Frobenius norm for such groups;  we investigated $\mathrm{SL}(X, \omega)$ for his surface corresponding to $a = 1 + \sqrt{3}$, see Figure~\ref{notInSmallSubgroup_figure} (and the figures in \cite{McMullen}).    Each of the vertical and the horizontal directions of this surface determine respective parabolic elements  $P_1 = \begin{pmatrix}1&0\\1&1  \end{pmatrix}, P_2 = \begin{pmatrix}1&2 + a\\0&1  \end{pmatrix} \in \mathrm{SL}(X, \omega)$.   Intriguingly enough,  we found that all of the $25,010$ elements of $\mathrm{SL}(X, \omega)$ up to Frobenius norm 256 lie in  the subgroup generated by $-I_2$ and the parabolic elements $P_1$ and $P_2$.        

This leads us to ask for the study of the  general  growth behavior in terms of $x$ of the minimal number of elements within an infinitely generated group to generate the subgroup containing all elements of Frobenius norm up to $x$.

In the meantime, we give an explicit  element of $\mathrm{SL}(X, \omega) \setminus \langle -I_2, P_1, P_2\rangle$. The Dirichlet domain of the Fuchsian $\langle P_1, P_2\rangle$ based at $z=i$ is easily seen to have vertical sides at $x = \pm (2+a)/2$ as well as  sides whose feet at infinity are $-2, 0$ and $0, 2$ and free sides $(-(2+a)/2, -2)$ and  $(2, (2+a)/2)$.    By  Theorem 9.2 of \cite{McMullen}, any direction of a separatrix containing a Weierstrass point corresponds to a parabolic element of $\mathrm{SL}(X, \omega)$.    Now,  any such direction of slope whose inverse lies in a free side of a fundamental domain cannot be the fixed point of any element of $\langle P_1, P_2\rangle$.   In particular,  using the polygonal presentation, again  see Figure~\ref{notInSmallSubgroup_figure}, the  visible geodesic path from $(0,0)$  to the Weierstrass point at  $(2+3 a/2, 1 + a/2)$ that corresponds to a parabolic fixed point of $\mathrm{PSL}(X, \omega)$ but not of $ \langle P_1, P_2\rangle$.      Let $Q \in \mathrm{PSL}(X, \omega)$ be the corresponding element.   Then the cylinder decomposition in the direction leads to $Q = \begin{pmatrix} -254 - 147 \sqrt{3}& 657 + 379 \sqrt{3}\\-99 - 57 \sqrt{3} & 
  256 + 147 \sqrt{3}\end{pmatrix}$.    Conjugation by $P_{2}^{-1}$ then gives the element in $\mathrm{SL}(X, \omega) \setminus \langle -I_2, P_1, P_2\rangle$ of smallest norm that we have found to date.

\end{document}